\renewcommand{\cite}{\citet}
\theoremstyle{plain}
\newtheorem{theorem}{Theorem}[section]
\newtheorem{proposition}[theorem]{Proposition}
\newtheorem{lemma}[theorem]{Lemma}
\newtheorem{corollary}[theorem]{Corollary}
\theoremstyle{definition}
\newtheorem{definition}[theorem]{Definition}
\theoremstyle{remark}
\newtheorem{remark}[theorem]{Remark}
\makeatletter \@addtoreset{equation}{section}
\newcommand{\N}{\mathbb{Z}_{+}}
\newcommand{\R}{\mathbb{R}}
\title{Existence and uniqueness for reflected BSDE with multivariate point process and right upper-semi-continuous obstacle}
\date{} 					
\author{
  Baadi Brahim  \\
 Ibn Tofa\"{\i}l University,\\
  Department of mathematics, faculty of sciences,\\
  BP 133, K\'{e}nitra, Morocco \\
  \texttt{brahim.baadi@uit.ac.ma}\\
   \And
 Mohamed Marzougue \\
  Abdelmalek Essaadi University ,\\
   Department of mathematics, Faculty of Sciences,\\
  LaR2A Laboratory\\
  Tetouan, Morocco\\
 \texttt{m.marzougue@uae.ac.ma} \\
}
\begin{document}
\maketitle

\begin{abstract}
In a  noise driving by a  multivariate point process $\mu$ with predictable compensator $\nu$, we prove existence and uniqueness of the reflected backward stochastic differential equation's solution with a lower obstacle $(\xi_{t})_{t\in[0,T]}$ which is assumed to be right upper-semicontinuous but not necessarily right-continuous process and a Lipschitz driver $f$. The result is established by using Mertens decomposition of optional strong (but not necessarily right continuous) super-martingales, an appropriate generalization of It\^{o}'s formula due to Gal'chouk and Lenglart and some tools from optimal stopping theory. A comparison theorem for this type of equations is given.
\end{abstract}
\keywords{Reflected BSDE, Multivariate point process, Mertens decomposition, Right upper-semi-continuous obstacle.}

\section{Introduction}

Nonlinear backward stochastic differential equations (BSDEs in short) were introduced by \cite{PP1990} where the authors have established the existence and uniqueness of the solution to multidimensional such equations in a finite time interval under uniform Lipschitz coefficient. Up to now, BSDEs have been successfully applied to many various areas such as mathematical finance \cite{EQ:1997,ELKarouiPeng1997}, stochastic control and stochastic games \cite{HL,EH:2003}, partial differential equations \cite{BBP:1997,PP1992} and so on.

As a various of BSDEs, \cite{ELKaroui1997} have introduced reflected backward stochastic differential equations (Reflected BSDEs in short), which are BSDEs but the solution is forced to stay above an adapted continuous process called obstacle (or barrier).
Many efforts have been made to relax the regularity of the obstacle process and/or to consider a larger stochastic basis than the Brownian one, in this context \cite{Hamadene2002} has considered the Reflected BSDEs when the obstacle is discontinuous in Brownian setting, further, \cite{HamadeneOknine2003} have studied the Reflected BSDEs with jumps in Poisson-Brownian setting where the authors have proved the existence and uniqueness of solutions when the obstacle is assumed to be right
continuous with left limits (rcll) whose jumping times are inaccessible stopping times, and thus the jumping times of the $Y$-component of the solution come only from those of its Poisson process part and then they are inaccessible. Later, \cite{Essaky2008} and \cite{HamadeneOknine2011} have studied the case when the obstacle is only rcll and the jumping times of the $Y$-component of the solution come not only from those of its Poisson process part but also from those of the obstacle which are predictable stopping times. Recently, \cite{foresta2019} has studied Reflected BSDEs when the noise is driven by a marked point process and a Brownian motion, and the obstacle process is supposed rcll.

Recently, \cite{Ouknine2015} have introduced a new extension of Reflected BSDEs to the case where the obstacle is not necessarily right-continuous in which the additional nondecreasing process, which pushes the $Y$-component of the solution to stay above the obstacle, is no longer right continuous, and thus also the solution is no longer right continuous. The authors have showed the existence and uniqueness of the solution to Reflected BSDEs where the obstacle is assumed to be right upper semi-continuous. Later, many papers have discussed the problem of Reflected BSDEs beyond right-continuity (see for instance \cite{akdim,baadi2017,baadi2018,GIOQ,Klimsiak2018,Ma,ME:2019,ME:2020} in different directions).

As in \cite{Ouknine2015} an independent Poisson random measure is added to the driving noise, and motivated by several applications to stochastic optimal control and financial modelling, more general multivariate point processes were considered in the BSDE. For this end, we mainly consider Reflected BSDEs driven by a multivariate point measure when the obstacle process is assumed to be right upper semi-continuous. Precisely, consider a multivariate point measure $\mu$ with predictable compensator $\nu$, and given a data $(\xi,f)$ for r.u.s.c obstacle $\xi$ and Lipschitz driver $f$. We address the following Reflected BSDE
$$
Y_{t} =
\xi_{T}+\int_{t}^{T}f(s,Y_{s},Z_{s}(.))ds-\int_{t}^{T}\int_{\mathcal{U}}Z_{s}(x)(\mu-\nu)(dx,ds)
+A_{T}-A_{t}+C_{T-}-C_{t-}\quad 0\leq t\leq T
$$
where $(Y,Z,A,C)$ is called the solution of Reflected BSDEs associated with parameters $(\xi,f)$ such that $Y$ dominates the obstacle $\xi$, $A$ is a nondecreasing right-continuous predictable process which satisfies some Skorokhod conditions and $C$ is a nondecreasing right-continuous adapted purely discontinuous process which satisfies some minimality conditions. The role of $A$ and $C$ is to control the left-jumps and right-jumps of $Y$ respectively.

The paper is organized as follows: In section \ref{sec1}, we give some notations, preliminary results and assumptions needed in this article. The section \ref{sec2} is devoted to prove our main results as well as existence, uniqueness and comparison theorem for the solutions of such equations. In section \ref{sec3} we give some connections between RBSDEs studied in the section \ref{sec2} with an optimal stopping problem.

\section{Preliminaries}\label{sec1}

Consider a fixed positive real number $T>0$, a Lusin\footnote{this is a sequence $(S_n,X_n)$ of points, with distinct times of occurrence $S_n$ and with marks $X_n$, so it can be viewed as a random measure of the form $$\mu(dt,dx)=\sum_{n\geq 1:S_{n}}\varepsilon_{(S_{n},X_{n})}(dt,dx),$$
where $\varepsilon_{(t,x)}$ denotes the Dirac measure} space $(\mathcal{U},\mathcal{E})$ and a
filtered probability space $(\Omega,\mathcal{F},\mathbb{P},\mathbb{F}=\{\mathcal{F}_{t}, t\geq0\})$, with  $(\mathcal{F}_{t})_{(t\geq0)}$  complete, right continuous and quasi-left continuous. We denote by $\mathcal{P}$ the predictable $\sigma$-field on $\Omega\times[0,T]$, and for any auxiliary measurable space $(G,\mathcal{G})$ a function on the product $\Omega\times[0,T]\times G$ which is measurable with respect to $\mathcal{P}\otimes\mathcal{G}$  is called predictable.\\

Let $\mu$ be an integer-valued random measure on $\R_{+}\times\mathcal{U}$. In the sequel we use a martingale representation theorem for the random measure $\mu$, namely lemma$~\ref{lemme1}$. For this reason, we suppose that  $(\mathcal{F}_{t})_{(t\geq 0)}$ is the natural filtration of $\mu$, i.e. the smallest right-continuous filtration in which $\mu$ is optional. We also assume that $\mu$ is a
discrete random measure, i.e. the sections of the set $D=\{(\omega,t);\mu(\omega,\{t\}\times \mathcal{U})=1\}$
are finite on every finite interval.\\
We denote by $\nu$ the predictable compensator of the measure $\mu$, relative
to the filtration $(\mathcal{F}_{t})_{(t\geq 0)}$.  Then, $\nu$ can be disintegrated as follows
\begin{equation}
\nu(\omega,dt,dx)=dK_{t}(\omega)\phi_{\omega,t}(dx),
 \label{m}
\end{equation}
where $K$ is a right-continuous nondecreasing predictable process such that $K_{0}=0$, and
$\phi$ is a transition probability from $(\Omega\times[0,T],\mathcal{P})$ into $(\mathcal{U},\mathcal{E})$. We suppose, also, that $\nu$  satisfies $\nu(\{t\}\times dx)\leq1$ identically, so that $\Delta K_{t}=K_{t}-K_{t-}\leq1$.\\

We denote by $\mathcal{B}(\mathcal{U})$ the set of all Borel measurable functions on  $\mathcal{U}$. Given a measurable function $Z:\Omega\times[0,T]\times \mathcal{U}\rightarrow \R$, we write $Z_{\omega,t}(x)= Z(\omega,t,x)$, so that $Z_{\omega,t}$ often abbreviated as $Z_{t}$ or $Z_{t}(.)$, is an element of $\mathcal{B}(\mathcal{U})$. In this paper for a given  $\beta\geq 0$,  we denote:
\begin{itemize}
\item $\mathcal{T}_{t,T}$ (resp. $\mathcal{T}^p_{t,T}$) is the set of all stopping times (resp. predictable stopping times) $\tau$
such that $\mathbb{P}(t\leq \tau \leq T)=1$. More generally, for a
given stopping time  $S$ in $\mathcal{T}_{0,T}$, we denote by
   $\mathcal{T}_{S,T}$ the set of all  stopping times $\tau$
  such that $\mathbb{P}(S \leq \tau \leq T)=1$.
\item  $\mathds{S}^{2,\beta}$ is the set of real-valued optional processes $Y$ such
that:
         $$\||Y\||_{\mathds{S}^{2,\beta}}^{2}=E\Bigl[ess\sup_{\tau\in\mathcal{T}_{0,T}}e^{\beta \tau}|Y_{\tau}|^{2}\Bigr]<\infty.\footnote{The map $\||.\||_{\mathds{S}^{2,0}}$ is a norm on $\mathds{S}^{2,0}$. In particular, if $Y\in\mathds{S}^{2,0}$ is such that
$\||Y\||_{\mathds{S}^{2,0}}=0$, then $Y$ is indistinguishable from the null process, that is $Y_{t}=0$, $0\leq t\leq T$ a.s. Moreover when $Y$ is right-continuous, $\||Y\||_{\mathds{S}^{2,\beta}}^{2}=E\Bigl[\sup_{t\in[0,T]}e^{\beta t}|Y_{t}|^{2}\Bigr]$ (cf. Proposition$~\ref{propA1}$)}$$

\item $\mathds{H}^{2,\beta}$ is the set of real-valued predictable processes $Z$ such that
         $$\|Z\|_{\mathds{H}^{2,\beta}}^{2}=E\Bigl[\int_{0}^{T}e^{\beta t} \int_{\mathcal{U}}|Z_{t}(x)-\bar{Z}_{t}|^{2}\nu(dx,dt)+\sum_{0< t\leq T}e^{\beta t}|\bar{Z}_{t}|^{2}(1-\triangle K_{t})\Bigr]<\infty.$$
where $\bar{Z}_{t}=\int_{\mathcal{U}}Z_{t}(x)\nu(\{t\},dx)$, $ 0\leq t \leq T$.
\end{itemize}

\begin{remark}
\label{Banach}
\begin{itemize}
  \item The space $\mathds{S}^{2,\beta}$ endowed with the norm
$\parallel\mid.\parallel\mid_{\mathds{S}^{2,\beta}}$ is a Banach space ( \cite[Proposition 2.1]{Ouknine2015}).
  \item Let $\beta\geq0$ so that $e^{\beta t}\geq 1$, for a given  predictable processes $Z$, we have
$$
E\Bigl[\int_{0}^{T}\int_{\mathcal{U}}|Z_{t}(x)-\bar{Z}_{t}|^{2}\nu(dx,dt)+\sum_{0< t\leq T}|\bar{Z}_{t}|^{2}(1-\triangle K_{t})\Bigr]=\|Z\|_{\mathds{H}^{2,0}}^{2}\leq \|Z\|_{\mathds{H}^{2,\beta}}^{2}
$$
\end{itemize}
\end{remark}
Now we enumerate an important proposition, which can be found in \cite[Theorem IV.84]{DellacherieMeyer1975} (or in \cite[Theorem 3.2.]{AshkanNikeghbali2006}).
\begin{proposition}
\label{AshkanNikeghbali}
\begin{itemize}
  \item Let $(X_t)$ and $(Y_t)$ be two optional processes. If for every
finite stopping time $\tau$ one has, $X_{\tau}=Y_{\tau}$, then the
processes $(X_t)$ and $(Y_t)$ are indistinguishable.
  \item Let $X$ and $Y$ be two optional processes such that
$X_{S}\leq Y_{S}$ a.s. for all $S\in\mathcal{T}_{0,T}$. Then, $X\leq
Y$ up to an evanescent set.
\end{itemize}
\end{proposition}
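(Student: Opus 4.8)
The plan is to deduce both assertions from the \emph{optional section theorem}, which is really the only substantial ingredient here: for every set $A$ in the optional $\sigma$-field on $\Omega\times[0,T]$ and every $\varepsilon>0$ there is a stopping time $\tau$ with values in $[0,T]\cup\{+\infty\}$ whose graph is contained in $A$ and which satisfies $\mathbb{P}(\tau\le T)\ge\mathbb{P}(\pi(A))-\varepsilon$, where $\pi(A)\subseteq\Omega$ denotes the projection of $A$ onto $\Omega$. Everything else is bookkeeping organized around one truncation trick.

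I would establish the second assertion first, since the first follows from it. Put $D=\{(\omega,t)\in\Omega\times[0,T]:X_t(\omega)>Y_t(\omega)\}$. Since $X$ and $Y$ are optional, so is $X-Y$, and hence $D$ belongs to the optional $\sigma$-field. Suppose, for contradiction, that $D$ is not evanescent, i.e. $\mathbb{P}(\pi(D))>0$. Applying the section theorem with $\varepsilon=\tfrac12\mathbb{P}(\pi(D))$ yields a stopping time $\tau$ with graph inside $D$ and $\mathbb{P}(\tau\le T)>0$. Set $S=\tau\,\mathbf{1}_{\{\tau\le T\}}+T\,\mathbf{1}_{\{\tau>T\}}$; one checks directly that $S$ is a stopping time with values in $[0,T]$, so $S\in\mathcal{T}_{0,T}$, and on the event $\{\tau\le T\}$, which has positive probability, $S=\tau$ and therefore $(\omega,S(\omega))\in D$, i.e. $X_S>Y_S$. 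This contradicts the assumption $X_S\le Y_S$ a.s., so $D$ must be evanescent, which is precisely the statement $X\le Y$ up to an evanescent set.

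For the first assertion, observe that if $X_\tau=Y_\tau$ a.s. for every (finite) stopping time, then in particular $X_S\le Y_S$ and $Y_S\le X_S$ a.s. for all $S\in\mathcal{T}_{0,T}$; applying the second assertion twice shows that both $\{X>Y\}$ and $\{Y>X\}$ are evanescent, hence $\{X\ne Y\}$ is evanescent and $X$, $Y$ are indistinguishable. Alternatively, one can run the same section argument directly on the optional set $\{X\ne Y\}$.

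The main obstacle is not conceptual but requires care: one must use that the difference of two optional processes is again optional, so that the relevant level sets $\{X>Y\}$ and $\{X\ne Y\}$ really lie in the optional $\sigma$-field, and one must convert the a priori possibly infinite section time $\tau$ into an honest element of $\mathcal{T}_{0,T}$ without destroying the positive-probability event on which the two processes disagree — this is exactly what the truncation $S$ above achieves. It is worth emphasizing that no pathwise regularity of $X$ or $Y$ (right-continuity, existence of one-sided limits) is used anywhere, which is why this statement, rather than a continuity-based uniqueness argument, is the natural tool in the ``beyond right-continuity'' setting of this paper.
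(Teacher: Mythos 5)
Your proof is correct and is essentially the standard argument: the paper does not prove this proposition but cites it from Dellacherie--Meyer (Theorem IV.84) and Nikeghbali, and the proof there is exactly this application of the optional section theorem to the optional sets $\{X>Y\}$ and $\{X\neq Y\}$, with the same truncation to land in $\mathcal{T}_{0,T}$. Nothing further is needed.
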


We recall the following  decomposition property of martingales  (or local martingale) as the integral of a $\mathcal{P}$-measurable process defined on  $\Omega\times[0,T]\times \mathcal{U}$, with respect to the random measure $\mu-\nu$. More precisely we have the two results:
\begin{lemma} \textbf{[\cite[Theorem 5.4]{Jacod1975}]}
$\label{lemme1}$ Each martingale (or local martingale) ($M_{t}$)
can be written as an integral
\begin{equation}
M_{t}=M_{0}+\int_{0}^{t}\int_{\mathcal{U}}Z_{s}(x)(\mu-\nu)(ds,dx)
 \label{mono3}
\end{equation}
where $Z$ is a predictable process defined on $\Omega\times[0,T]\times \mathcal{U}$.
\end{lemma}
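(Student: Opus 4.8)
The statement is the Jacod martingale representation theorem for the compensated jump measure $\mu-\nu$, and the natural route is through the $L^2$-theory of stable subspaces, crucially using the hypothesis that $\mathbb{F}$ is the natural (right-continuous, completed) filtration of the discrete random measure $\mu$. The plan is first to reduce to a square-integrable martingale $M$ with $M_0=0$: a genuine martingale on $[0,T]$ is already closed, and for a local martingale one takes a localizing sequence, represents each stopped martingale, and pastes the integrands, which agree in the $\mathds{H}^{2,0}$-seminorm (that is, $d\nu\times d\mathbb{P}$-a.e., modulo the $\bar Z$ ambiguity) on the stochastic intervals where they overlap. So we may work in the Hilbert space $\mathcal{H}^2_0$ of square-integrable martingales null at $0$.

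Next, for any predictable $W:\Omega\times[0,T]\times\mathcal{U}\to\R$ with $\|W\|_{\mathds{H}^{2,0}}<\infty$, the process $\int_0^{\cdot}\int_{\mathcal{U}}W_s(x)(\mu-\nu)(ds,dx)$ is a well-defined element of $\mathcal{H}^2_0$, and $W\mapsto\int W\,d(\mu-\nu)$ is, up to that same $\bar Z$ ambiguity, an isometry for $\|\cdot\|_{\mathds{H}^{2,0}}$ — this is exactly why the norm was defined using the disintegration $\nu(dt,dx)=dK_t\,\phi_t(dx)$ and the bound $\Delta K_t\le1$. Let $\mathcal{L}\subseteq\mathcal{H}^2_0$ be the closed stable subspace generated by these integrals; the lemma is equivalent to $\mathcal{L}=\mathcal{H}^2_0$. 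I would take $N\in\mathcal{H}^2_0$ orthogonal to $\mathcal{L}$, meaning $N\,\bigl(\int W\,d(\mu-\nu)\bigr)$ is a uniformly integrable martingale, equivalently the predictable covariation $\langle N,\int W\,d(\mu-\nu)\rangle$ vanishes, for every admissible $W$. Testing with integrands of the form $W(\omega,s,x)=\mathds{1}_{\{s\le\tau\}}\,\mathds{1}_B(x)$, $\tau\in\mathcal{T}_{0,T}$, $B\in\mathcal{E}$, and using the standard expression for that bracket in terms of $\nu$, one finds that the part of the jump measure of $N$ carried by the jump set $D=\{(\omega,t):\mu(\omega,\{t\}\times\mathcal{U})=1\}$ vanishes; since $\mathbb{F}$ is quasi-left-continuous, every $\mathbb{F}$-martingale has its jumps supported on a thin set exhausted by the graphs of the $S_n$, i.e. precisely on $D$, so $N$ is continuous.

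It remains to show that a continuous $\mathbb{F}$-martingale $N$ with $N_0=0$ is identically zero, and this is the \textbf{crux}, the only place where "$\mathbb{F}$ is the natural filtration of the discrete random measure $\mu$" is genuinely used. On each stochastic interval $[S_n,S_{n+1})$ the filtration is generated, conditionally on $\mathcal{F}_{S_n}$, only by the "no-jump-yet" clock $\mathds{1}_{\{S_{n+1}>t\}}$, while the conditional law of $(S_{n+1},X_{n+1})$ given $\mathcal{F}_{S_n}$ is carried by $\nu$; computing $E[\,\cdot\mid\mathcal{F}_t]$ recursively shows that every $\mathbb{F}$-martingale is, on each such interval, an absolutely continuous Stieltjes integral against $K$ plus a jump at $S_{n+1}$ — in particular of finite variation on compacts. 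A continuous martingale of finite variation is constant, so $N$ is constant on each $[S_n,S_{n+1})$ and, by continuity across the $S_n$ together with $S_n\uparrow\infty$ on finite intervals, $N\equiv N_0=0$. Hence $\mathcal{L}=\mathcal{H}^2_0$: every $M\in\mathcal{H}^2_0$ is an $\mathcal{H}^2$-limit of integrals $\int W^k\,d(\mu-\nu)$, the $W^k$ are Cauchy in $\|\cdot\|_{\mathds{H}^{2,0}}$ with predictable limit $Z$, and $M=\int Z\,d(\mu-\nu)$; undoing the localization of the first step gives the representation for an arbitrary (local) martingale, with $Z$ predictable.

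The \textbf{main obstacle} is exactly the step just described: ruling out a continuous orthogonal complement, equivalently proving that the natural filtration of $\mu$ carries no Brownian-type component; everything else is the routine machinery of stochastic integration against an integer-valued random measure and the $L^2$-isometry encoded in $\|\cdot\|_{\mathds{H}^{2,0}}$. An alternative that avoids the stable-subspace detour is to argue directly and recursively — build $Z$ on $[0,S_1]$ from the conditional law of $(S_1,X_1)$ so that $M-\int Z\,d(\mu-\nu)$ is constant there, pass to $\mathcal{F}_{S_1}$ and repeat on $[S_1,S_2]$, and so on, then let $n\to\infty$ — in which case the obstacle merely shifts to the measurability and integrability of the patched integrand and to controlling the tail of the series, the conceptual content (no continuous part in this filtration) being the same.
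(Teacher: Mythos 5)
The paper does not actually prove this lemma: it is imported verbatim as Theorem 5.4 of \cite{Jacod1975}, so there is no internal argument to compare yours against. Your sketch is essentially the classical proof contained in that reference (and in Chou--Meyer and Boel--Varaiya--Wong for the univariate case): reduce to the Hilbert space $\mathcal{H}^2_0$, use the isometry encoded in $\|\cdot\|_{\mathds{H}^{2,0}}$ to identify the closed stable subspace of stochastic integrals against $\mu-\nu$, and kill the orthogonal complement using the explicit structure of the natural filtration of a discrete point process. Two refinements are needed. First, you locate the use of the hypothesis that $\mathbb{F}$ is the natural filtration of $\mu$ exclusively in the no-continuous-part step, but it is also needed earlier: orthogonality of $N$ to all integrals $\int W\,d(\mu-\nu)$ only forces the $\mu$-conditional expectation of $\Delta N$ given the $\sigma$-field $\mathcal{P}\otimes\mathcal{E}$ to vanish on $D$; to upgrade this to $\Delta N=0$ on $D$ you need $\mathcal{F}_{S_n}=\mathcal{F}_{S_n-}\vee\sigma(X_n)$, which is again the natural-filtration hypothesis. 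In a strictly larger filtration supporting $\mu$ the orthogonal complement can contain purely discontinuous martingales jumping on $D$, so this is not cosmetic. Second, the localization step is glossed: a general local martingale need not be locally square-integrable (its jumps at the localizing times need not be in $L^2$), so the reduction to $\mathcal{H}^2_0$ requires either an $\mathcal{H}^1$ version of the duality or, more simply for a discrete $\mu$, the recursive interval-by-interval construction you mention at the end, which represents arbitrary local martingales directly. Neither point breaks the architecture; with them supplied, your sketch is a faithful reconstruction of the cited proof.
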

This result is similar to the known representation of martingales as the stochastic integral of a predictable process with respect to a fundamental martingale, for Poisson and Wiener processes.
\begin{proposition} \textbf{[\cite[Proposition 3.71-(a)]{Jacod1975}]}
Let $M$ the martingale of the form $~\ref{mono3}$, then  the predictable bracket $<M,M>_{t}$ is given by:
$$<M,M>_{T}=\int_{0}^{T}\int_{\mathcal{U}}|Z_{t}(x)-\bar{Z}_{t}|^{2}\nu(dx,dt)+\sum_{0< t\leq T}|\bar{Z}_{t}|^{2}(1-\triangle K_{t})$$
where $\bar{Z}_{t}=\int_{\mathcal{U}}Z_{t}(x)\nu(\{t\},dx)$, $ 0\leq t \leq T$.
\end{proposition}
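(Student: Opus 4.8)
The plan is to reduce the claim to computing the optional quadratic variation $[M,M]$ and then taking its dual predictable projection. Since $M$ is a stochastic integral with respect to the compensated integer-valued random measure $\mu-\nu$, it has no continuous martingale part, so $[M,M]_{t}=\sum_{0<s\le t}(\Delta M_{s})^{2}$, and $\langle M,M\rangle$ is by definition the unique predictable increasing process for which $[M,M]-\langle M,M\rangle$ is a local martingale, i.e. the dual predictable projection of $[M,M]$. Under our standing hypotheses $\mu$ is a discrete random measure with $\mu(\{t\}\times\mathcal{U})\le 1$, so $\mu(\{t\},\cdot)$ is either the null measure or a single Dirac mass, and the construction of the integral with respect to $\mu-\nu$ yields the jump identity
$$\Delta M_{t}=\int_{\mathcal{U}}Z_{t}(x)\,\mu(\{t\},dx)-\bar{Z}_{t}.$$

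First I would square this identity: since $\mu(\{t\},\cdot)$ carries at most one atom one has $\big(\int_{\mathcal{U}}Z_{t}(x)\mu(\{t\},dx)\big)^{2}=\int_{\mathcal{U}}|Z_{t}(x)|^{2}\mu(\{t\},dx)$, hence
$$(\Delta M_{t})^{2}=\int_{\mathcal{U}}|Z_{t}(x)|^{2}\mu(\{t\},dx)-2\bar{Z}_{t}\int_{\mathcal{U}}Z_{t}(x)\mu(\{t\},dx)+|\bar{Z}_{t}|^{2}.$$
Summing over $0<t\le T$ and writing the sums over the jump times, finite on every compact interval, as integrals with respect to $\mu$ expresses $[M,M]_{T}$ as a sum of three terms; passing to dual predictable projections then replaces $\mu$ by $\nu$ in the first two, the last sum $\sum_{0<t\le T}|\bar{Z}_{t}|^{2}$ being already predictable (because $\bar{Z}$ is predictable and $\bar{Z}_{t}\ne 0$ forces $\Delta K_{t}>0$). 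This gives
$$\langle M,M\rangle_{T}=\int_{0}^{T}\!\!\int_{\mathcal{U}}|Z_{t}(x)|^{2}\nu(dt,dx)-2\int_{0}^{T}\!\!\int_{\mathcal{U}}\bar{Z}_{t}Z_{t}(x)\,\nu(dt,dx)+\sum_{0<t\le T}|\bar{Z}_{t}|^{2}.$$

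It then remains to rewrite this in the announced form. Using the disintegration $\nu(dt,dx)=dK_{t}\,\phi_{t}(dx)$ and the identity $\bar{Z}_{t}=\Delta K_{t}\int_{\mathcal{U}}Z_{t}(x)\phi_{t}(dx)$, the integrand of the cross term vanishes off the countable set $\{\Delta K_{t}>0\}$, so $\int_{0}^{T}\int_{\mathcal{U}}\bar{Z}_{t}Z_{t}(x)\nu(dt,dx)=\sum_{0<t\le T}|\bar{Z}_{t}|^{2}$, and therefore $\langle M,M\rangle_{T}=\int_{0}^{T}\int_{\mathcal{U}}|Z_{t}(x)|^{2}\nu(dt,dx)-\sum_{0<t\le T}|\bar{Z}_{t}|^{2}$. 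Finally, expanding $|Z_{t}(x)-\bar{Z}_{t}|^{2}=|Z_{t}(x)|^{2}-2\bar{Z}_{t}Z_{t}(x)+|\bar{Z}_{t}|^{2}$, integrating against $\nu$, and using once more that $\int_{0}^{T}|\bar{Z}_{t}|^{2}dK_{t}=\sum_{0<t\le T}|\bar{Z}_{t}|^{2}\Delta K_{t}$, converts this into $\int_{0}^{T}\int_{\mathcal{U}}|Z_{t}(x)-\bar{Z}_{t}|^{2}\nu(dx,dt)+\sum_{0<t\le T}|\bar{Z}_{t}|^{2}(1-\Delta K_{t})$, which is the claimed identity.

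The step I expect to require the most care is the termwise passage to dual predictable projections, since for a general integrand the increasing process $|Z|^{2}*\mu$ need not have locally integrable variation: one must localize $M$ along a sequence of stopping times making it a square-integrable martingale (so that $[M,M]_{T}$ is integrable and the compensation is legitimate), and then remove the localization, checking that the resulting predictable process does not depend on the localizing sequence. This bookkeeping is the only genuinely delicate point; everything else is the elementary algebra above. Alternatively, the statement coincides with \cite[Proposition 3.71-(a)]{Jacod1975} and may simply be quoted.
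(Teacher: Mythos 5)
Your derivation is correct, but note that the paper itself offers no proof of this proposition: it is imported verbatim from Jacod (Proposition 3.71-(a)), so there is nothing internal to compare against. What you have written is essentially the standard derivation of that cited result, and the algebra checks out: $\Delta M_t=\int_{\mathcal{U}}Z_t(x)\mu(\{t\},dx)-\bar Z_t$, the purely discontinuous character of $M$ gives $[M,M]_t=\sum_{s\le t}(\Delta M_s)^2$, and your telescoping of the cross term via $\bar Z_t=\Delta K_t\int_{\mathcal{U}}Z_t(x)\phi_t(dx)$ (which uses $\phi_t(\mathcal{U})=1$, i.e.\ $\nu(\{t\}\times\mathcal{U})=\Delta K_t$ — exactly the normalization the paper assumes for discrete $\mu$) reduces both sides to $\int_0^T\int_{\mathcal{U}}|Z_t(x)|^2\nu(dt,dx)-\sum_{0<t\le T}|\bar Z_t|^2$, so the two expressions agree. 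The one genuinely delicate point is the one you flag yourself: compensating the three terms of $(\Delta M_t)^2$ separately requires each of the corresponding increasing (or finite-variation) processes to have locally integrable variation — in particular the signed cross term, which you can dominate by $\tfrac12(\bar Z_t^2+W_t^2)$ after localizing $M$ to be square-integrable — and the predictability of $t\mapsto\sum_{s\le t}\bar Z_s^2$ rests on exhausting $\{\Delta K>0\}$ by graphs of predictable stopping times. With that localization spelled out your argument is complete; alternatively, as you say, the proposition can simply be quoted, which is all the paper does.
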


The random variable $\xi$ is $\mathcal{F}_{T}$-measurable with values in $\R$ and
$f$  a real-valued function on $\Omega\times[0,T]\times\R\times\mathcal{B}(\mathcal{U})$, such that
$f(\omega,t,y,Z_{t}(.))$ is predictable for any predictable function $Z$ on $\Omega\times[0,T]\times\mathcal{U}$. By $Prog$ we denote the $\sigma$-field of progressive subsets of $\Omega\times[0,T]$.
\begin{definition}[\textbf{Driver, Lipschitz driver}]
A function $f$ is said to be a driver if:
\begin{itemize}
    \item $f:\Omega\times [0,T]\times\R\times\mathcal{B}(\mathcal{U})\longrightarrow \R$ \\
    $(\omega,t,y,z)\longmapsto f(\omega,t,y,z)$ is $Prog\times\mathcal{B}(\R)\times\mathcal{E}$-measurable.
    \item $E[\int_{0}^{T}\mid f(t,0,0)\mid^{2}dt]<\infty$.
\end{itemize}
A driver $f$ is called a Lipschitz driver if moreover there exists a
constant $L\geq0$ such that $\mathbb{P}\otimes dt$-a.s., for each
$y,y'\in\R$ and $z,z'\in L(\mathcal{U},\mathcal{E},\phi_{\omega,t}(dx))$

\begin{multline}
\Bigl|f(\omega,t,y,z)-f(\omega,t,y',z')\Bigr|\leq
L\Bigl(|y-y'|+ \Bigl(\int_{\mathcal{U}}|z(x)-z'(x)- \int_{\mathcal{U}}(z(u)-z'(u))\phi_{\omega,t}(du)|^{2}\phi_{\omega,t}(dx) \\ +(1-\triangle K_{t})|\int_{\mathcal{U}}(z(x)-z'(x))\phi_{\omega,t}(dx)|^{2}\Bigr)^{\frac{1}{2}}\Bigr)
 \label{moneq0}
\end{multline}
\end{definition}
For a l\`{a}dl\`{a}g process $Y$, we denote by $Y_{t+}$ and
$Y_{t-}$ the right-hand and left-hand limit of $Y$ at  $t$. We
denote by $\Delta_{+}Y_{t}=Y_{t+}-Y_{t}$ the size of the
right jump of $Y$ at $t$, and by
$\Delta Y_{t}=Y_{t}-Y_{t-}$ the size of the left jump of
$Y$ at $t$.

\section{Reflected BSDE on optional obstacle and multivariate point process}\label{sec2}

Let $f$ be a Lipschitz driver and $\xi=(\xi_t)_{t\in[0,T]}$ be a left-limited process belongs to $\mathds{S}^{2,0}$. We suppose moreover that the process $\xi$ is right upper-semicontinuous (r.u.s.c. for short). A process $\xi$
satisfying the previous properties will be called an obstacle, or a barrier.
\begin{definition}
\label{definition1} A process $(Y,Z,A,C)$ is said to be a
solution to the reflected BSDE with parameters $(f,\xi)$, where $f$
is a Lipschitz driver and $\xi$ is an obstacle, if
\begin{itemize}
  \item $(Y,Z,A,C) \in\mathds{S}^{2,0}\times\mathds{H}^{2,0}\times\mathds{S}^{2,0}\times\mathds{S}^{2,0}$ and  for all $\tau\in\mathcal{T}_{0,T}$:
  \begin{multline}
Y_{\tau} =
\xi_{T}+\int_{\tau}^{T}f(s,Y_{s},Z_{s}(.))ds-\int_{\tau}^{T}\int_{\mathcal{U}}Z_{s}(x)(\mu-\nu)(dx,ds)
+A_{T}-A_{\tau}+C_{T-}-C_{\tau-}
   \label{moneq4}
\end{multline}
  \item  $Y\geq \xi$ (up to an evanescent set) a.s.
  \item The process $A$ is a nondecreasing right-continuous predictable
process with $A_0=0$, $E(A_T)<\infty$  such that:
\begin{equation}
\int_{0}^{T}1_{ \{Y_{t}>\xi_{t}\}}dA_{t}^{c}=0 \,\,\  a.s.\,\,\ and\,\,\ (Y_{\tau-}-
\xi_{\tau-})(A_{\tau}^{d}-A_{\tau-}^{d})=0 \,\,\  a.s. \,\,\ \forall \tau\in\mathcal{T}^p_{0,T}
 \label{moneq6}
\end{equation}
  \item The process $C$ is a nondecreasing right-continuous adapted purely
discontinuous process with  $C_{0-}=0$, $E(C_T)<\infty$
such that:
\begin{equation}
 (Y_{\tau}-\xi_{\tau})(C_{\tau}-C_{\tau-})=0 \,\,\
a.s.\,\,\ \forall \tau\in\mathcal{T}_{0,T}
 \label{moneq7}
\end{equation}
\end{itemize}
Here $A^{c}$  denotes the continuous part of the nondecreasing
process $A$ and $A^{d}$ its discontinuous part.
\end{definition}

\begin{remark}
\begin{itemize}
  \item If the process $(Y,Z,A,C)$ satisfies equation $~\ref{moneq4}$  then  for all  $t\in[0,T]$, a.s.
\begin{eqnarray*}
  Y_{t} = \xi_{T}+\int_{t}^{T}f(s,Y_{s},Z_{s}(.))ds-\int_{t}^{T}\int_{\mathcal{U}}Z_{s}(x)(\mu-\nu)(dx,ds)+A_{T}-A_{t}+C_{T-}-C_{t-}
\end{eqnarray*}
  \item  If the process $(Y,Z,A,C)$ satisfies the above definition, then the process $Y$ has left and right limits. Moreover, the process $(Y_{t}+\int_{0}^{t}f(s,Y_{s},Z_{s}(.))ds)_{t\in[0,T]}$ is a strong supermartingale.
\item From $~\ref{moneq4}$, we get $\triangle C_{t}=C_{t}-C_{t-}=Y_{t}-Y_{t+}$.
Hence, $Y_{t}\geq Y_{t+}$, for all $t\in[0,T]$, which implies that $Y$ is necessarily r.u.s.c. Moreover,
$Y$ is right-continuous if and only if $C\equiv0$.
\end{itemize}
\end{remark}

\subsection{The case when the driver $f$ does not depend on $(y,z)$.}

In this part, we assume that the driver $f$ depends only on $\omega$ and $s$ (i.e. $f(s,y,z)=f(\omega,s)$). Let us first prove the following Lemma which will play an important role in the sequel.
\begin{lemma}
\label{lemme2} Let
$(Y^{1},Z^{1},A^{1},C^{1})$ (resp. $(Y^{2},Z^{2},A^{2},C^{2})$.)
be a solution to the reflected BSDE associated with driver $f_{1}(\omega,s)$
(resp.$f_{2}(\omega,s)$) and with obstacle $\xi$. Then there exists $c>0$
such that for all $\epsilon>0$ and all $\beta>\frac{1}{\epsilon^{2}}$ we have
\begin{equation}
\|Z^{1}-Z^{2}\|^{2}_{\mathds{H}^{2,\beta}}\leq \epsilon^{2} E\Bigl[\int_{0}^{T}e^{\beta s}(f_{1}(s)-f_{2}(s))^{2}ds\Bigr]
 \label{unicite1}
\end{equation}
and
\begin{equation}
 \||Y^{1}-Y^{2}|\|_{\mathds{S}^{2,\beta}}^{2}\leq2\epsilon^{2}(1+2c^{2}) E\Bigl[\int_{0}^{T}e^{\beta s}(f_{1}(s)-f_{2}(s))^{2}ds\Bigr].
 \label{unicite2}
 \end{equation}
\end{lemma}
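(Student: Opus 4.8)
\emph{Proof strategy.} Put $\delta Y:=Y^{1}-Y^{2}$, $\delta Z:=Z^{1}-Z^{2}$, $\delta A:=A^{1}-A^{2}$, $\delta C:=C^{1}-C^{2}$ and $\delta f(s):=f_{1}(s)-f_{2}(s)$. Subtracting the two equations \eqref{moneq4} gives $\delta Y_{T}=0$, and a short computation from \eqref{moneq4} shows that $\delta Y_{t}+\int_{0}^{t}\delta f(s)\,ds=\delta M_{t}-\delta A_{t}-\delta C_{t-}$ for $t\in[0,T]$, where $\delta M_{t}:=\delta Y_{0}+\int_{0}^{t}\int_{\mathcal U}\delta Z_{s}(x)(\mu-\nu)(dx,ds)$ is a square-integrable, purely discontinuous martingale; in particular $\delta M^{c}\equiv0$, and by quasi-left-continuity of $\mathbb F$ the martingale $\delta M$ has no jump at any predictable time, so its jumps are disjoint from those of the predictable process $\delta A$. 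Since $\delta Y\in\mathds{S}^{2,\beta}$ and $\delta Z\in\mathds{H}^{2,\beta}$ have finite norms, every stochastic integral below is a true martingale. The plan is to apply the Gal'chouk--Lenglart change-of-variables formula to the left- and right-limited optional process $s\mapsto e^{\beta s}|\delta Y_{s}|^{2}$ on $[\tau,T]$, for $\tau\in\mathcal T_{0,T}$ arbitrary.

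Using $\delta Y_{T}=0$, $\delta M^{c}\equiv0$, the elementary identities $(\delta Y_{s})^{2}-(\delta Y_{s-})^{2}-2\delta Y_{s-}\Delta\delta Y_{s}=(\Delta\delta Y_{s})^{2}$ and $(\delta Y_{s+})^{2}-(\delta Y_{s})^{2}-2\delta Y_{s}\Delta_{+}\delta Y_{s}=(\Delta\delta C_{s})^{2}$, together with $\Delta\delta Y_{s}=\Delta\delta M_{s}-\Delta\delta A_{s}$ with no cross term (hence $(\Delta\delta Y_{s})^{2}=(\Delta\delta M_{s})^{2}+(\Delta\delta A_{s})^{2}$) and $\sum_{s\le T}(\Delta\delta M_{s})^{2}=[\delta M]_{T}$, the formula rearranges, after discarding the two non-negative terms $\sum e^{\beta s}(\Delta\delta A_{s})^{2}$ and $\sum e^{\beta s}(\Delta\delta C_{s})^{2}$, into
\begin{multline*}
e^{\beta\tau}|\delta Y_{\tau}|^{2}+\beta\int_{\tau}^{T}e^{\beta s}|\delta Y_{s}|^{2}\,ds+\int_{(\tau,T]}e^{\beta s}\,d[\delta M]_{s}\le 2\int_{\tau}^{T}e^{\beta s}\,\delta Y_{s}\,\delta f(s)\,ds\\
-2\int_{\tau}^{T}e^{\beta s}\,\delta Y_{s-}\,d\delta M_{s}+2\int_{(\tau,T]}e^{\beta s}\,\delta Y_{s-}\,d\delta A_{s}+2\sum_{\tau\le s<T}e^{\beta s}\,\delta Y_{s}\,\Delta\delta C_{s}.
\end{multline*}
The crucial point is that the last two terms on the right are $\le0$. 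Writing $\delta Y_{s-}=(Y^{1}_{s-}-\xi_{s-})-(Y^{2}_{s-}-\xi_{s-})$ and splitting $d\delta A=d\delta A^{c}+(\text{jumps})$ (the continuous part does not charge the at most countable jump set of $\delta Y$, so left limits may be replaced by values there), the Skorokhod conditions \eqref{moneq6} for $A^{1}$ and for $A^{2}$ annihilate the two ``diagonal'' contributions $\int e^{\beta s}(Y^{i}_{s-}-\xi_{s-})\,dA^{i}_{s}$, while $Y^{j}\ge\xi$ ($j\ne i$) together with $dA^{i}\ge0$ makes the two ``cross'' contributions non-positive once the overall minus sign is accounted for; hence $\int_{(\tau,T]}e^{\beta s}\delta Y_{s-}\,d\delta A_{s}\le0$, and the same computation with the minimality condition \eqref{moneq7} for $C^{1}$ and $C^{2}$ gives $\sum_{\tau\le s<T}e^{\beta s}\delta Y_{s}\,\Delta\delta C_{s}\le0$.

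Applying Young's inequality $2\delta Y_{s}\delta f(s)\le\epsilon^{-2}|\delta Y_{s}|^{2}+\epsilon^{2}|\delta f(s)|^{2}$, taking $E[\,\cdot\mid\mathcal F_{\tau}]$ (which annihilates the $\delta M$-integral), and using that $E\big[\int_{(\tau,T]}e^{\beta s}d[\delta M]_{s}\mid\mathcal F_{\tau}\big]=E\big[\int_{(\tau,T]}e^{\beta s}d\langle\delta M\rangle_{s}\mid\mathcal F_{\tau}\big]$ together with the formula for the predictable bracket recalled above, which gives $E\big[\int_{0}^{T}e^{\beta s}d\langle\delta M\rangle_{s}\big]=\|\delta Z\|_{\mathds{H}^{2,\beta}}^{2}$, one obtains for every $\tau\in\mathcal T_{0,T}$
\begin{equation*}
e^{\beta\tau}|\delta Y_{\tau}|^{2}+\Big(\beta-\tfrac{1}{\epsilon^{2}}\Big)E\Big[\int_{\tau}^{T}e^{\beta s}|\delta Y_{s}|^{2}ds\,\Big|\,\mathcal F_{\tau}\Big]+E\Big[\int_{(\tau,T]}e^{\beta s}d[\delta M]_{s}\,\Big|\,\mathcal F_{\tau}\Big]\le\epsilon^{2}E\Big[\int_{\tau}^{T}e^{\beta s}|\delta f(s)|^{2}ds\,\Big|\,\mathcal F_{\tau}\Big].
\end{equation*}
Since $\beta>\epsilon^{-2}$, taking $\tau=0$ and discarding the first two non-negative left-hand terms yields \eqref{unicite1}. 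For \eqref{unicite2}, keep the $\delta M$-integral: from the inequality \emph{before} conditioning, discard all non-negative left-hand terms except $e^{\beta\tau}|\delta Y_{\tau}|^{2}$, use the two reflection terms just shown to be $\le0$ and $\int_{\tau}^{T}\le\int_{0}^{T}$ to get $e^{\beta\tau}|\delta Y_{\tau}|^{2}\le\epsilon^{2}\int_{0}^{T}e^{\beta s}|\delta f(s)|^{2}ds+2(N_{\tau}-N_{T})$ with $N_{t}:=\int_{0}^{t}e^{\beta s}\delta Y_{s-}\,d\delta M_{s}$, hence $ess\sup_{\tau\in\mathcal T_{0,T}}e^{\beta\tau}|\delta Y_{\tau}|^{2}\le\epsilon^{2}\int_{0}^{T}e^{\beta s}|\delta f(s)|^{2}ds+4\sup_{t\le T}|N_{t}|$. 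Taking expectations, then Burkholder--Davis--Gundy for $N$ together with $[N]_{T}\le\big(ess\sup_{\tau}e^{\beta\tau}|\delta Y_{\tau}|^{2}\big)\int_{0}^{T}e^{\beta s}d[\delta M]_{s}$ (note $e^{\beta s}|\delta Y_{s-}|^{2}=\lim_{u\uparrow s}e^{\beta u}|\delta Y_{u}|^{2}\le ess\sup_{\tau}e^{\beta\tau}|\delta Y_{\tau}|^{2}$) and Cauchy--Schwarz, gives $E[\sup_{t}|N_{t}|]\le c\,\||\delta Y|\|_{\mathds{S}^{2,\beta}}\,\|\delta Z\|_{\mathds{H}^{2,\beta}}$ for a universal constant $c$; thus $\||\delta Y|\|_{\mathds{S}^{2,\beta}}^{2}\le\epsilon^{2}E[\int_{0}^{T}e^{\beta s}|\delta f(s)|^{2}ds]+4c\,\||\delta Y|\|_{\mathds{S}^{2,\beta}}\|\delta Z\|_{\mathds{H}^{2,\beta}}$, and absorbing the cross term ($4c\,ab\le\tfrac12 a^{2}+8c^{2}b^{2}$) followed by substitution of \eqref{unicite1} yields \eqref{unicite2}, up to renaming $c$.

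The main obstacle is the Gal'chouk--Lenglart expansion of the genuinely left- and right-limited process $e^{\beta s}|\delta Y_{s}|^{2}$, keeping careful track of the left-jump and right-jump contributions, and above all the sign analysis showing that the interaction of the two pairs of reflection processes $(A^{1},C^{1})$ and $(A^{2},C^{2})$, both associated with the \emph{same} barrier $\xi$, produces non-positive terms; the Young and Burkholder--Davis--Gundy estimates and the self-improving absorption argument (needed because $\int_{0}^{T}e^{\beta s}|\delta f(s)|^{2}ds$ is only integrable, not square-integrable, over $\Omega$) are then routine.
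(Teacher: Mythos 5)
Your proof is correct and follows essentially the same route as the paper's: the Gal'chouk--Lenglart formula applied to $e^{\beta s}|\delta Y_s|^2$, the sign analysis of the reflection terms via the Skorokhod condition \eqref{moneq6} and the minimality condition \eqref{moneq7}, Young's inequality with the parameter $\epsilon$, and then Burkholder--Davis--Gundy plus Cauchy--Schwarz and an absorption step for the $\mathds{S}^{2,\beta}$ estimate. The only differences are cosmetic refinements (you keep the pathwise bracket $[\delta M]$ and pass to $\langle\delta M\rangle$ only under expectation, and you spell out the diagonal/cross decomposition that the paper delegates to a citation), and your constant $2\epsilon^{2}(1+8c^{2})$ agrees with \eqref{unicite2} after renaming $c$, which the statement permits.
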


\begin{proof}
Let  $\beta >0$ and   $\epsilon>0$ be such that $\beta\geq
\frac{1}{\epsilon^{2}}$. We set  $\widetilde{Y}:=Y^{1}-Y^{2}$,
$\widetilde{Z}:=Z^{1}-Z^{2}$, $\widetilde{A}:=A^{1}-A^{2}$,
$\widetilde{C}:=C^{1}-C^{2}$ and
$\widetilde{f}(\omega,t):=f_{1}(\omega,t)-f_{2}(\omega,t)$. We note
that  $\widetilde{Y}_{T}:=\xi_{T}-\xi_{T}=0$. Moreover,
\begin{equation}
\widetilde{Y}_{\tau}=\widetilde{Y}_{0}-\int_{0}^{\tau}\widetilde{f}(s)ds+
\int_{0}^{\tau}\int_{\mathcal{U}}\widetilde{Z}_{s}(x)(\mu-\nu)(dx,ds)-\widetilde{A}_{\tau}-\widetilde{C}_{\tau-}\,\,\
a.s.\,\,\ \forall \tau\in\mathcal{T}_{0,T} \label{moneq9}
\end{equation}

since
$\widetilde{A}_{0}=A_{0}^{1}-A_{0}^{2}=0$ and
$\widetilde{C}_{0-}=C_{0-}^{1}-C_{0-}^{2}=0$. Thus we see that
$\widetilde{Y}$  is an optional (strong) semimartingale with
decomposition
$\widetilde{Y}_{t}=\widetilde{Y}_{0}+M_{t}+A_{t}+B_{t}$, where
$M_{t}=\int_{0}^{t}\int_{\mathcal{U}}\widetilde{Z}_{s}(x)(\mu-\nu)(dx,ds)$,
$A_{t}=-\int_{0}^{t}\widetilde{f}(s)ds-\widetilde{A}_{t}$ and
$B_{t}=-\widetilde{C}_{t-}$ (the notation is that of Corollary$~\ref{corollaire1}$). Applying  the generalization of the classical It\^{o} formula  (see Theorem$~\ref{galchouklenglart}$ and Corollary$~\ref{corollaire1}$) to $F(x,y)=xy^{2}$ with  $x= e^{\beta t}$ and  $y=\widetilde{Y}_{t}$ gives: almost surely, for all $t\in[0,T]$,
\begin{eqnarray*}
  e^{\beta t}\widetilde{Y}_{t}^{2} &=&  -\int_{0}^{t}\beta e^{\beta s}\widetilde{Y}_{s}^{2}ds+2\int_{0}^{t}e^{\beta s}\widetilde{Y}_{s-}d(A+M)_{s}- \int_{0}^{t} e^{\beta s}d<M,M>_{s} \\
   &-& \sum_{0< s\leq t}e^{\beta s}(\widetilde{Y}_{s}-\widetilde{Y}_{s-})^{2}-\int_{0}^{t}2e^{\beta
s}\widetilde{Y}_{s}d(B)_{s+}-\sum_{0\leq s<t}e^{\beta
  s}(\widetilde{Y}_{s+}-\widetilde{Y}_{s})^{2}.
\end{eqnarray*}
Using the expressions of $M$, $A$ and $B$  and the fact that
$\widetilde{Y}_{T}=0$, we get: almost surely, for all $t\in[0,T]$,
\begin{multline}
\label{mono8}
   e^{\beta t}\widetilde{Y}_{t}^{2} +
   \int_{t}^{T}\int_{\mathcal{U}}e^{\beta s}|\widetilde{Z}_{s}(x)-\hat{Z}_{s}|^{2}\nu(dx,ds)+\sum_{t< s\leq T}e^{\beta s}|\hat{Z}_{s}|^{2}(1-\triangle K_{s}) = -\int_{t}^{T}\beta e^{\beta s}\widetilde{Y}_{s}^{2}ds\\
   +2\int_{t}^{T}e^{\beta s}\widetilde{Y}_{s-}\widetilde{f}(s)ds
  +2\int_{t}^{T}e^{\beta s}\widetilde{Y}_{s-}d\widetilde{A} - 2\int_{t}^{T}\int_{\mathcal{U}}e^{\beta s}\widetilde{Y}_{s-}\widetilde{Z}_{s}(x)(\mu-\nu)(dx,ds)\\
 +2\int_{t}^{T}e^{\beta s}\widetilde{Y}_{s}d(\widetilde{C})_{s}
  -\sum_{t<s\leq T}e^{\beta s}(\widetilde{Y}_{s}-\widetilde{Y}_{s-})^{2}
-\sum_{t\leq s<T}e^{\beta s}(\widetilde{Y}_{s+}-\widetilde{Y}_{s})^{2}
\end{multline}
where $\hat{Z}_{t}=\int_{\mathcal{U}}\widetilde{Z}_{t}(x)\nu(\{t\},dx)$. It is clear that for all $t\in[0,T]$  $-\sum_{t<s\leq T}e^{\beta
s}(\widetilde{Y}_{s}-\widetilde{Y}_{s-})^{2}-\sum_{t\leq
s<T}e^{\beta s}(\widetilde{Y}_{s+}-\widetilde{Y}_{s})^{2} \leq 0$.
By applying the inequality $2ab\leq
(\frac{a}{\epsilon})^{2}+\epsilon^{2}b^{2}$ for all $(a,b)$
in $\mathbb{R}^{2}$, we get: a.s. for all $t\in[0,T]$
\begin{eqnarray*}
  -\int_{t}^{T}\beta e^{\beta s}\widetilde{Y}_{s}^{2}ds+2\int_{t}^{T}e^{\beta
   s}\widetilde{Y}_{s-}\widetilde{f}(s)ds \leq (\frac{1}{\epsilon^{2}}-\beta)\int_{t}^{T}e^{\beta
   s}\widetilde{Y}_{s-}^{2}ds+\epsilon^{2}\int_{t}^{T}e^{\beta
   s}\widetilde{f}(s)^{2}ds.
\end{eqnarray*}
As $\beta>\frac{1}{\epsilon^{2}}$, we have $(\frac{1}{\epsilon^{2}}-\beta)\int_{t}^{T}e^{\beta s}\widetilde{Y}_{s-}^{2}ds\leq 0$, for all $t\in[0,T]$ a.s. Similarly to the proof of Lemma 3.4. in  \cite{baadi2017} and from the properties $~\ref{moneq7}$ and $~\ref{moneq6}$  of the definition$~\ref{definition1}$, the terms $\int_{t}^{T}e^{\beta s}\widetilde{Y}_{s-}d\widetilde{A}$ and $\int_{t}^{T}e^{\beta s}\widetilde{Y}_{s}d(\widetilde{C})_{s}$ are  non-positive. The above
observations, together with equation $~\ref{mono8}$, lead to the following inequality: a.s., for all

\begin{multline}
\label{mono9}
   e^{\beta t}\widetilde{Y}_{t}^{2} + \int_{t}^{T}\int_{\mathcal{U}}e^{\beta s}|\widetilde{Z}_{s}(x)-\hat{Z}_{s}|^{2}\nu(dx,ds)+\sum_{t< s\leq T}e^{\beta s}|\hat{Z}_{s}|^{2}(1-\triangle K_{s})\leq \epsilon^{2}\int_{t}^{T}e^{\beta
   s}\widetilde{f}(s)^{2}ds\\
  -2\int_{t}^{T}\int_{\mathcal{U}}e^{\beta s}\widetilde{Y}_{s-}\widetilde{Z}_{s}(x)(\mu-\nu)(dx,ds), \,\,\,\,\,\,\ a.s. \,\,\forall t\in[0,T].
\end{multline}
From the  inequality $~\ref{mono9}$ we derive first an estimate for  $\|\widetilde{Z}(x)\|_{\mathds{H}^{2,\beta}}$ and then an estimate for $\||\widetilde{Y}\||_{\mathds{S}^{2,\beta}}$.\\

To this purpose, we show first  that the stochastic integral $\int_{0}^{T}\int_{\mathcal{U}}e^{\beta s}\widetilde{Y}_{s-}\widetilde{Z}_{s}(x)(\mu-\nu)(dx,ds)$ has zero expectation. For that, we show that
\begin{equation}
E\Bigl[\sqrt{\int_{0}^{T}\int_{\mathcal{U}}e^{2\beta
s}\widetilde{Y}_{s-}^{2}|\widetilde{Z}_{s}(x)-\hat{Z}_{s}|^{2}\nu(dx,ds)+\sum_{0< s\leq T}e^{2\beta
s}\widetilde{Y}_{s-}^{2}|\hat{Z}_{s}|^{2}(1-\triangle K_{s})}\Bigr]< \infty.
 \label{mono10}
\end{equation}
By using the left-continuity of a.e.
trajectory of the process  $(\widetilde{Y}_{s-})$, we have
\begin{equation}
(\widetilde{Y}_{s-})^{2}(\omega)\leq
\sup_{t\in\mathbb{Q}}(\widetilde{Y}_{t-})^{2}(\omega) \,\,\
\text{for all} \,\,\ s\in(0,T],\,\,\  \text{for a.s.}\,\,\
\omega\in\Omega.
 \label{mono12}
\end{equation}
On the other hand, for all $t\in(0,T]$, a.s.,
$(\widetilde{Y}_{t-})^{2} \leq ess\sup_{\tau\in\mathcal{T}_{0,T}}(\widetilde{Y}_{\tau})^{2}$. Then
\begin{equation}
\sup_{t\in\mathbb{Q}}(\widetilde{Y}_{t-})^{2}\leq
ess\sup_{\tau\in\mathcal{T}_{0,T}}(\widetilde{Y}_{\tau})^{2} \,\,\
a.s.
 \label{mono13}
\end{equation}
According to $~\eqref{mono12}$ and $~\eqref{mono13}$ we obtain
\begin{multline}
\label{mono14}
\int_{0}^{T}\int_{\mathcal{U}}e^{2\beta
s}\widetilde{Y}_{s-}^{2}|\widetilde{Z}_{s}(x)-\hat{Z}_{s}|^{2}\nu(dx,ds)+\sum_{0< s\leq T}e^{2\beta
s}\widetilde{Y}_{s-}^{2}|\hat{Z}_{s}|^{2}(1-\triangle K_{s})\leq \\
\int_{0}^{T}\int_{\mathcal{U}}\sup_{t\in\mathbb{Q}}(\widetilde{Y}_{t-})^{2}e^{2\beta
s}|\widetilde{Z}_{s}(x)-\hat{Z}_{s}|^{2}\nu(dx,ds)+\sum_{0< s\leq T}\sup_{t\in\mathbb{Q}}(\widetilde{Y}_{t-})^{2}e^{2\beta
s}|\hat{Z}_{s}|^{2}(1-\triangle K_{s})\leq \\
\int_{0}^{T}\int_{\mathcal{U}}ess\sup_{\tau\in\mathcal{T}_{0,T}}(\widetilde{Y}_{\tau})^{2}e^{2\beta
s}|\widetilde{Z}_{s}(x)-\hat{Z}_{s}|^{2}\nu(dx,ds)+\sum_{0< s\leq T} ess\sup_{\tau\in\mathcal{T}_{0,T}}(\widetilde{Y}_{\tau})^{2}e^{2\beta
s}|\hat{Z}_{s}|^{2}(1-\triangle K_{s}).
\end{multline}

Using $~\eqref{mono14}$, together with Cauchy-Schwarz inequality, we get

\begin{eqnarray*}
E\Bigl[\sqrt{\int_{0}^{T}\int_{\mathcal{U}}e^{2\beta s}\widetilde{Y}_{s-}^{2}|\widetilde{Z}_{s}(x)-\hat{Z}_{s}|^{2}\nu(dx,ds)+\sum_{0< s\leq T}e^{2\beta
s}\widetilde{Y}_{s-}^{2}|\hat{Z}_{s}|^{2}(1-\triangle K_{s})}\Bigr]\leq\\
E\Bigl[\sqrt{ess\sup_{\tau\in\mathcal{T}_{0,T}}(\widetilde{Y}_{\tau})^{2}}\sqrt{\int_{0}^{T}\int_{\mathcal{U}}e^{2\beta
s}|\widetilde{Z}_{s}(x)-\hat{Z}_{s}|^{2}\nu(dx,ds)+\sum_{0< t\leq T}e^{2\beta s}|\hat{Z}_{s}|^{2}(1-\triangle K_{s})}\Bigr]\leq \\  \||\widetilde{Y}\||_{\mathds{S}^{2,0}}.\|\widetilde{Z}\|_{\mathds{H}^{2,2\beta}}< \infty.
\end{eqnarray*}
We conclude that $~\eqref{mono10}$ hods, whence, we get $E\Bigl[\int_{0}^{T}\int_{\mathcal{U}}e^{\beta s}\widetilde{Y}_{s-}\widetilde{Z}_{s}(x)(\mu-\nu)(dx,ds)\Bigr]=0$. By taking expectations on both sides of $~\eqref{mono9}$ with $t=0$, we obtain:

$$
E\Bigl[\int_{0}^{T}\int_{\mathcal{U}}e^{\beta s}|\widetilde{Z}_{s}(x)-\hat{Z}_{s}|^{2}\nu(dx,ds)+\sum_{0< s\leq T}e^{\beta s}|\hat{Z}_{s}|^{2}(1-\triangle K_{s})\Bigr]\leq \epsilon^{2}E\Bigl[\int_{0}^{T}e^{\beta s}\widetilde{f}(s)^{2}ds\Bigr].
$$
Hence, we obtain the first inequality of the lemma: $\|\widetilde{Z}(x)\|_{\mathds{H}^{2,\beta}}\leq \epsilon^{2}E\Bigl[\int_{0}^{T}e^{\beta s}\widetilde{f}(s)^{2}ds\Bigr]$. From $~\eqref{mono9}$ we also get, for all $\tau\in\mathcal{T}_{0,T}$

\begin{equation}
e^{\beta \tau}\widetilde{Y}_{\tau}^{2}\leq \epsilon^{2}\int_{0}^{T}e^{\beta s}\widetilde{f}(s)^{2}ds -2\int_{\tau}^{T}\int_{\mathcal{U}}e^{\beta s}\widetilde{Y}_{s-}\widetilde{Z}_{s}(x)(\mu-\nu)(dx,ds),  \,\,\,\,\ a.s.
\label{mono15}
\end{equation}

By taking first the essential supremum over $\tau\in\mathcal{T}_{0,T}$, and then the expectation on
both sides of the inequality $~\eqref{mono15}$, we obtain:

\begin{multline}
\label{mono16}
   E\Bigl[ess\sup_{\tau\in\mathcal{T}_{0,T}}e^{\beta \tau}\widetilde{Y}_{\tau}^{2}\Bigr]\leq \epsilon^{2} E\Bigl[\int_{0}^{T}e^{\beta s}\widetilde{f}(s)^{2}ds\Bigr]+2E\Bigl[ess\sup_{\tau\in\mathcal{T}_{0,T}}|\int_{0}^{\tau}\int_{\mathcal{U}}e^{\beta s}\widetilde{Y}_{s-}\widetilde{Z}_{s}(x)(\mu-\nu)(dx,ds)|\Bigr].
\end{multline}

Let us consider the last term in  $~\eqref{mono15}$. By applying the Proposition$~\eqref{propA1}$ to the right-continuous process $(\int_{0}^{t}\int_{\mathcal{U}}e^{\beta s}\widetilde{Y}_{s-}\widetilde{Z}_{s}(x)(\mu-\nu)(dx,ds))_{t\in[0,T]}$ and Burkholder-Davis-Gundy
inequalities (\cite{Protter2000} Theorem 48, page 193. For $p=1$), we get
\begin{multline}
\label{mono17}
 E\Bigl[ess\sup_{\tau\in\mathcal{T}_{0,T}}|\int_{0}^{\tau}\int_{\mathcal{U}}e^{\beta s}\widetilde{Y}_{s-}\widetilde{Z}_{s}(x)(\mu-\nu)(dx,ds)|\Bigr]= E\Bigl[\sup_{t\in[0,T]}|\int_{0}^{t}\int_{\mathcal{U}}e^{\beta s}\widetilde{Y}_{s-}\widetilde{Z}_{s}(x)(\mu-\nu)(dx,ds)|\Bigr]\\
\leq cE\Bigl[\sqrt{\int_{0}^{T}\int_{\mathcal{U}}e^{2\beta s}\widetilde{Y}_{s-}^{2}|\widetilde{Z}_{s}(x)-\hat{Z}_{s}|^{2}\nu(dx,ds)+\sum_{0< s\leq T}e^{2\beta s}\widetilde{Y}_{s-}^{2}|\hat{Z}_{s}|^{2}(1-\triangle K_{s})} \Bigr]
\end{multline}
where $c$ is a positive constant (which does not depend
on the choice of the other parameters). The same way  used to obtain
equation $~\eqref{mono14}$ leads to
\begin{multline}
\sqrt{\int_{0}^{T}\int_{\mathcal{U}}e^{2\beta s}\widetilde{Y}_{s-}^{2}|\widetilde{Z}_{s}(x)-\hat{Z}_{s}|^{2}\nu(dx,ds)+\sum_{0< s\leq T}e^{2\beta s}\widetilde{Y}_{s-}^{2}|\hat{Z}_{s}|^{2}(1-\triangle K_{s})}\leq\\
\sqrt{ess\sup_{\tau\in\mathcal{T}_{0,T}}e^{\beta \tau}\widetilde{Y}_{\tau}^{2}} \sqrt{\int_{0}^{T}\int_{\mathcal{U}}e^{\beta s}|\widetilde{Z}_{s}(x)-\hat{Z}_{s}|^{2}\nu(dx,ds)+\sum_{0< s\leq T}e^{\beta s}|\hat{Z}_{s}|^{2}(1-\triangle K_{s})}.
\label{mono18}
\end{multline}
From the  inequalities  $~\eqref{mono17}$, $~\eqref{mono18}$ and
$ab\leq\frac{1}{2}a^{2}+\frac{1}{2}b^{2}$, we have

\begin{multline}
2E\Bigl[ess\sup_{\tau\in\mathcal{T}_{0,T}}|\int_{0}^{\tau}\int_{\mathcal{U}}e^{\beta s}\widetilde{Y}_{s-}\widetilde{Z}_{s}(x)(\mu-\nu)(dx,ds)|\Bigr]\leq
\frac{1}{2}E\Bigl[ess\sup_{\tau\in\mathcal{T}_{0,T}}e^{\beta \tau}\widetilde{Y}_{\tau}^{2}\Bigr]+\\
2c^{2}E\Bigl[\int_{0}^{T}\int_{\mathcal{U}}e^{\beta s}|\widetilde{Z}_{s}(x)-\hat{Z}_{s}|^{2}\nu(dx,ds)+\sum_{0< s\leq T}e^{\beta s}|\hat{Z}_{s}|^{2}(1-\triangle K_{s})\Bigr].
\label{mono19}
\end{multline}
By $~\eqref{mono16}$ and $~\eqref{mono19}$  we  derive that
$$
\frac{1}{2} \||\widetilde{Y}\||_{\mathds{S}^{2,\beta}}^{2}\leq \epsilon^{2}E\Bigl[\int_{0}^{T}e^{\beta s}\widetilde{f}(s)^{2}ds\Bigr]+2c^{2}\|\widetilde{Z}(x)\|_{\mathds{H}^{2,\beta}}^{2}.
$$
This inequality, together with the estimates of $\widetilde{Z}(x)$,
gives
$$
 \||\widetilde{Y}\||_{\mathds{S}^{2,\beta}}^{2}\leq 2\epsilon^{2}(1+2c^{2})E\Bigl[\int_{0}^{T}e^{\beta s}\widetilde{f}(s)^{2}ds\Bigr].
$$
\end{proof}

Before proving the existence and uniqueness results of the solution to RBSDEs associated with parameters $(\xi,f(\omega,t))$, let us recall the following very useful result of the optimal stopping theory, which will be used in our proofs.
\begin{proposition}
\label{proposition1}
Let $(\overline{Y}(S))$ be the family defined for  $S\in\mathcal{T}_{0,T}$ by
\begin{equation}
\overline{Y}(S)=ess\sup_{\tau\in\mathcal{T}_{S,T}}E\Bigl[ \xi_{\tau}+\int_{S}^{\tau}f(u)du/\mathcal{F}_{S}\Bigr],
\end{equation}
\begin{itemize}
  \item There exists a l\`{a}dl\`{a}g optional process $(\overline{Y}_{t})_{t\in[0,T]}$  which aggregates the family $(\overline{Y}(S))$ (i.e.$\overline{Y}_{S}=\overline{Y}(S)$, for all $S\in\mathcal{T}_{0,T}$).
      Moreover, the process  ($\overline{Y}_{t}+\int_{0}^{t}f(u)du)_{t\in[0,T]}$ is a strong supermartingale.
  \item We have $\overline{Y}_{S}=\xi_{S}\vee\overline{Y}_{S+}$ a.s. for all $S\in\mathcal{T}_{0,T}$.
  \item Furthermore, $\overline{Y}_{S+}=ess\sup_{\tau>S}E\Bigl[ \xi_{\tau}+\int_{S}^{\tau}f(u)du/\mathcal{F}_{S}\Bigr]$,  a.s. for all $S\in\mathcal{T}_{0,T}$.
\end{itemize}
\end{proposition}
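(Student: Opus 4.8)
The plan is to follow the classical theory of optimal stopping for strong supermartingales adapted to the l\`{a}dl\`{a}g (not necessarily right-continuous) setting, essentially as developed by El~Karoui and by Kobylanski--Quenez. First I would verify the structural hypotheses needed to apply that theory: since $\xi\in\mathds{S}^{2,0}$ and $\int_0^T|f(u)|^2\,du$ is integrable (because $f$ is a driver with $f$ not depending on $(y,z)$ here), the family $\{\xi_\tau+\int_S^\tau f(u)\,du:\tau\in\mathcal{T}_{S,T}\}$ is uniformly integrable, so each $\overline{Y}(S)$ is a well-defined element of $L^1$; moreover the family is closed under pairwise maximization (by splitting on an $\mathcal{F}_S$-measurable event one shows the conditional expectation of the larger of two stopping times dominates each), which is what guarantees the essential supremum is attained as an increasing limit and that $(\overline{Y}(S))_S$ is a supermartingale family, i.e.\ $\overline{Y}(S)\geq E[\overline{Y}(S')\mid\mathcal{F}_S]$ for $S\leq S'$.

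For the first bullet, the aggregation into a l\`{a}dl\`{a}g optional process: I would pass to $\widehat Y(S):=\overline{Y}(S)+\int_0^S f(u)\,du$, which defines a supermartingale family. By the optional section theorem and the supermartingale property, $S\mapsto E[\widehat Y(S)]$ is nonincreasing; the standard result (e.g.\ Dellacherie--Meyer, or the version in Kobylanski--Quenez for strong supermartingale systems) then says there is an optional strong supermartingale $(\widehat Y_t)$ aggregating the family, and such a process automatically admits right and left limits along rationals and hence is l\`{a}dl\`{a}g up to an evanescent set. Setting $\overline{Y}_t:=\widehat Y_t-\int_0^t f(u)\,du$ gives the l\`{a}dl\`{a}g optional aggregating process, and $(\overline{Y}_t+\int_0^t f(u)\,du)=\widehat Y_t$ is by construction a strong supermartingale.

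For the second and third bullets I would argue directly from the definition. The inequality $\overline{Y}_S\geq\xi_S$ is immediate by taking $\tau=S$, and $\overline{Y}_S\geq\overline{Y}_{S+}$ follows since the right limit of a strong supermartingale is dominated by the process and $\int_0^\cdot f(u)\,du$ is continuous. For the reverse inequality $\overline{Y}_S\leq\xi_S\vee\overline{Y}_{S+}$: given any $\tau\in\mathcal{T}_{S,T}$, split on $\{\tau=S\}\in\mathcal{F}_S$, on which $\xi_\tau+\int_S^\tau f=\xi_S$, and on $\{\tau>S\}$, where one approximates and uses that $E[\xi_\tau+\int_S^\tau f\mid\mathcal{F}_S]\leq\overline{Y}_{S+}$ by the very definition of $\overline{Y}_{S+}$ as the essential supremum over $\tau>S$; taking the essential supremum over $\tau$ then yields $\overline{Y}_S\leq\xi_S\vee\overline{Y}_{S+}$, and combined with the two easy inequalities gives equality. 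The third bullet, the identification $\overline{Y}_{S+}=\mathrm{ess\,sup}_{\tau>S}E[\xi_\tau+\int_S^\tau f(u)\,du\mid\mathcal{F}_S]$, I would obtain by applying the first two bullets at stopping times $S_n\downarrow S$ with $S_n>S$ and passing to the limit, using the right-regularity of the aggregating supermartingale and uniform integrability to exchange limit and conditional expectation — this is the step where the r.u.s.c.\ assumption on $\xi$ is genuinely used, since without some one-sided regularity of the obstacle the right limit of $\overline{Y}$ need not coincide with the "strict-future" value function.

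The main obstacle I expect is the aggregation step in the first bullet: going from a consistent family indexed by stopping times to an honest l\`{a}dl\`{a}g optional process requires the full strength of the optional section theorem together with the supermartingale-family structure (to control oscillations and produce one-sided limits), rather than the usual right-continuous supermartingale regularization; I would cite the relevant result (Dellacherie--Meyer~IV, or Kobylanski--Quenez) rather than reprove it, and then the remaining identities are comparatively routine manipulations with essential suprema and the strong supermartingale property.
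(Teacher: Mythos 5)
The paper gives no proof of Proposition~\ref{proposition1} at all: it simply refers the reader to \cite[Proposition A.6]{Ouknine2015}, to \cite{Maingueneau1978}, and to \cite[Section B]{KobylanskiQuenez2012}. Your sketch is a faithful reconstruction of the standard argument from exactly those sources --- pass to the Snell-type family $\widehat Y(S)=\overline Y(S)+\int_0^S f(u)\,du$, use the upward-directed structure of $\{E[\xi_\tau+\int_S^\tau f\,du\mid\mathcal F_S]\}_\tau$ to obtain a supermartingale system, invoke the aggregation theorem for strong supermartingale systems to get a l\`adl\`ag optional aggregating process, and then identify the right limit --- so in substance you are doing the same thing the paper does, just with the citation unpacked.

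One issue to repair in the write-up: as stated, your proof of the second bullet (the inequality $\overline Y_S\le\xi_S\vee\overline Y_{S+}$) invokes the identification of $\overline Y_{S+}$ with the strict-future essential supremum, i.e.\ the third bullet, while your proof of the third bullet ``applies the first two bullets at $S_n\downarrow S$''; taken literally this is circular. The standard fix is to prove the third bullet first and independently: set $J(S)=\operatorname{ess\,sup}_{\tau>S}E[\xi_\tau+\int_S^\tau f(u)\,du\mid\mathcal F_S]$, use upward-directedness to commute the essential supremum with conditional expectation so that $E[\overline Y(S_n)\mid\mathcal F_S]\le J(S)$ for stopping times $S_n\downarrow S$ with $S_n>S$, pass to the limit by uniform integrability to get $\overline Y_{S+}\le J(S)$, and obtain the converse by conditioning at $\tau\wedge S_n$ for $\tau>S$; the second bullet then follows from your splitting on $\{\tau=S\}$ and $\{\tau>S\}$. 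A small further remark: these two identities are properties of the value family of any admissible reward and do not actually use the right upper-semicontinuity of $\xi$; that hypothesis is needed later (e.g.\ for the $\epsilon$-optimality of $\tau_S^\epsilon$ and the minimality conditions), so you should not present it as essential at this step.
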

For the proof of this proposition  the reader is referred to \cite[Proposition A.6]{Ouknine2015}, (See also \cite{Maingueneau1978} when $\xi$ is left- and right-limited, and  \cite[Section B.]{KobylanskiQuenez2012} in the general case).\\
In the following lemma, we prove existence and uniqueness of the
solution to the reflected BSDE from Definition$~\ref{definition1}$ in the
case where the driver $f$ depends only on $s$ and $\omega$, and
we characterize the first component of the solution as the value process of an optimal
stopping problem.
\begin{lemma}
\label{lemme3} Suppose that $E\Bigl[\int_{0}^{T}
f^{2}(t)dt\Bigr]<\infty$. Then, the reflected BSDE from
Definition$~\ref{definition1}$ admits a unique solution
$(Y,Z,A,C)\in\mathds{S}^{2,0}\times\mathds{H}^{2,0}\times\mathds{S}^{2,0}\times\mathds{S}^{2,0}$, and for each $S\in\mathcal{T}_{0,T}$, we have
\begin{equation}
Y_{S}=ess\sup_{\tau\in\mathcal{T}_{S,T}}E\Bigl[\xi_{\tau}+\int_{S}^{\tau}
f(t)dt|\mathcal{F}_{S} \Bigr] \,\,\,\ a.s.
\label{mono20}
\end{equation}
Moreover, $Y_{S}=\xi_{S}\vee Y_{S+}$ a.s.
\end{lemma}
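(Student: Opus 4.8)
The plan is to identify the candidate first component $Y$ with the value process $\overline{Y}$ of the optimal stopping problem supplied by Proposition~\ref{proposition1}, construct the remaining components $(Z,A,C)$ from the Mertens decomposition of the associated supermartingale, and then verify that this quadruple satisfies Definition~\ref{definition1}; uniqueness will then follow quickly from Lemma~\ref{lemme2}.

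\textbf{Existence.} First I would set $Y_{t}:=\overline{Y}_{t}$, the l\`{a}dl\`{a}g optional aggregator of the family $\overline{Y}(S)=\operatorname{ess\,sup}_{\tau\in\mathcal{T}_{S,T}}E[\xi_{\tau}+\int_{S}^{\tau}f(u)du\mid\mathcal{F}_{S}]$ given by Proposition~\ref{proposition1}. By that same proposition the process $N_{t}:=\overline{Y}_{t}+\int_{0}^{t}f(u)du$ is a strong optional supermartingale; one checks $N\in\mathds{S}^{2,0}$ using $\xi\in\mathds{S}^{2,0}$, the integrability of $f$, and Doob-type estimates for the value function of optimal stopping. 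I would then apply the Mertens decomposition of strong optional supermartingales (cited in the introduction): $N_{t}=N_{0}+M_{t}-A_{t}-C_{t-}$, where $M$ is a (uniformly integrable) martingale with $M_{0}=0$, $A$ is a nondecreasing right-continuous predictable process with $A_{0}=0$, and $C$ is a nondecreasing right-continuous adapted purely discontinuous process with $C_{0-}=0$, and $E[A_{T}+C_{T}]<\infty$. Next, by the martingale representation Lemma~\ref{lemme1} applied to $M$, there is a predictable $Z$ on $\Omega\times[0,T]\times\mathcal{U}$ with $M_{t}=\int_{0}^{t}\int_{\mathcal{U}}Z_{s}(x)(\mu-\nu)(dx,ds)$, and $Z\in\mathds{H}^{2,0}$ by the bracket formula (Proposition~\ref{AshkanNikeghbali}'s neighbouring proposition) together with $E[\langle M,M\rangle_{T}]<\infty$. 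Writing everything back in terms of $Y=\overline{Y}$ and using $\overline{Y}_{T}=\xi_{T}$ yields the backward equation~\eqref{moneq4} for all $\tau\in\mathcal{T}_{0,T}$, and $A,C\in\mathds{S}^{2,0}$ follows from their monotonicity and $L^{2}$-bounds.

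\textbf{The obstacle and Skorokhod conditions.} The inequality $Y\geq\xi$ up to an evanescent set is immediate from the definition of $\overline{Y}$ as an essential supremum over $\mathcal{T}_{S,T}$ (take $\tau=S$) together with Proposition~\ref{AshkanNikeghbali}. The minimality properties~\eqref{moneq6} and~\eqref{moneq7} are the delicate point, and this is where I expect the main obstacle. The relation $Y_{S}=\xi_{S}\vee Y_{S+}$ from Proposition~\ref{proposition1}, combined with $\Delta_{+}Y_{t}=-\Delta C_{t}$ (the third bullet of the Remark after Definition~\ref{definition1}), forces $C$ to charge only instants where $Y_{t}>\xi_{t}$ is impossible, i.e. $(Y_{\tau}-\xi_{\tau})\Delta C_{\tau}=0$; one must argue this carefully since $\xi$ is only r.u.s.c., using that $Y_{t}=\xi_{t}$ whenever $Y_{t}>Y_{t+}$. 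For the predictable pushing process $A$, I would follow the strategy invoked in Lemma~\ref{lemme2} (``similarly to the proof of Lemma~3.4 in \cite{baadi2017}''): on the predictable support of $dA^{c}$ and at predictable jump times of $A^{d}$, optimality in the stopping problem together with the fact that $N$ is a martingale on stochastic intervals where $\overline{Y}>\xi$ (a standard consequence of the Snell-envelope characterisation) gives $\{Y_{t}>\xi_{t}\}\cap\operatorname{supp}(dA^{c})=\emptyset$ and $(Y_{\tau-}-\xi_{\tau-})(A^{d}_{\tau}-A^{d}_{\tau-})=0$ for predictable $\tau$; here one leans on the left-limited regularity of $\xi$ and on Proposition~\ref{proposition1} evaluated at $\tau$ and $\tau-$.

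\textbf{Uniqueness and the optimal-stopping characterisation.} Uniqueness is the easy direction: if $(Y^{1},Z^{1},A^{1},C^{1})$ and $(Y^{2},Z^{2},A^{2},C^{2})$ are two solutions (both with driver $f_{1}=f_{2}=f$), then Lemma~\ref{lemme2} with $f_{1}-f_{2}\equiv0$ gives $\||Y^{1}-Y^{2}\||_{\mathds{S}^{2,\beta}}=0$ and $\|Z^{1}-Z^{2}\|_{\mathds{H}^{2,\beta}}=0$, hence $Y^{1}=Y^{2}$ up to indistinguishability and $Z^{1}=Z^{2}$ in $\mathds{H}^{2,0}$; then $A^{1}-A^{2}+C^{1}_{\cdot-}-C^{2}_{\cdot-}$ is identically zero by~\eqref{moneq4}, and separating predictable from purely-discontinuous-adapted parts forces $A^{1}=A^{2}$, $C^{1}=C^{2}$. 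Finally, the representation~\eqref{mono20} holds by construction since $Y=\overline{Y}$, and the last assertion $Y_{S}=\xi_{S}\vee Y_{S+}$ is exactly the second bullet of Proposition~\ref{proposition1}. The only genuine work, as flagged, is verifying that the Mertens components $(A,C)$ extracted abstractly do satisfy the precise Skorokhod/minimality conditions~\eqref{moneq6}–\eqref{moneq7}; everything else is assembly of the cited tools.
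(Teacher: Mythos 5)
Your proposal follows essentially the same route as the paper's proof: take $Y=\overline{Y}$ from Proposition~\ref{proposition1}, apply the Mertens decomposition and the martingale representation Lemma~\ref{lemme1} to build $(Z,A,C)$, verify the integrability and obstacle conditions, and derive uniqueness from Lemma~\ref{lemme2} together with the uniqueness of the Mertens decomposition and of the representation. The one point you flag as ``the only genuine work''---that the Mertens components satisfy the minimality conditions~\eqref{moneq6}--\eqref{moneq7}---is handled in the paper by citing the optimal-stopping results of El Karoui and Kobylanski--Quenez, which is exactly the Snell-envelope argument you sketch.
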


\begin{proof}
See Appendix.

\end{proof}


\subsection{The case of a general Lipschitz driver.}

Let is now assume that $f$ is a general Lipschitz driver, that is $f:=f(s,y,z)$. In the following theorem, we prove existence and uniqueness of the solution to the reflected BSDE from Definition$~\ref{definition1}$ by using the Banach  fixed-point theorem and  Remark$~\ref{remarque2}$.
\begin{remark}
\label{remarque2}
 Let $\beta>0$. For $\varphi\in\mathds{S}^{2,0}$, we have
 \begin{equation}
E\Bigl[\int_{0}^{T}e^{\beta t}|\varphi_{t}|^{2}dt \Bigr]\leq TE\Bigl[ess\sup_{\tau\in\mathcal{T}_{0,T}}e^{\beta \tau}|\varphi_{\tau}|^{2}\Bigr].
\end{equation}
\end{remark}
Indeed, by applying Fubini's theorem, we get
$$E\Bigl[\int_{0}^{T}e^{\beta t}|\varphi_{t}|^{2}dt\Bigr]= \int_{0}^{T}E[e^{\beta
t}|\varphi_{t}|^{2}]dt\leq
\int_{0}^{T}E\Bigl[ess\sup_{\tau\in\mathcal{T}_{0,T}}e^{\beta \tau}|\varphi_{\tau}|^{2}\Bigr]ds=
TE\Bigl[ess\sup_{\tau\in\mathcal{T}_{0,T}}e^{\beta\tau}|\varphi_{\tau}|^{2}\Bigr]$$

\begin{theorem}
Let $\xi$ be a left-limited and r.u.s.c. process in $\mathds{S}^{2,0}$
and let $f$ be a Lipschitz driver. The reflected BSDE with parameters
$(f,\xi)$ from Definition$~\ref{definition1}$  admits a unique
solution $(Y,Z,A,C)\in\mathds{S}^{2,0}\times\mathds{H}^{2,0}\times\mathds{S}^{2,0}\times\mathds{S}^{2,0}$.\\
Furthermore, if $(\xi_t)$ is assumed l.u.s.c. along stopping times, then  $(A_t)$ is continuous.
\end{theorem}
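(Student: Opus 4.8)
The strategy is a Picard-type fixed-point argument on the space $\mathds{S}^{2,\beta}$ (for a suitably large $\beta$), leveraging Lemma~\ref{lemme3} to solve the auxiliary reflected BSDE in which the driver does not depend on $(y,z)$, and Lemma~\ref{lemme2} to obtain the contraction estimate. First I would define the map $\Phi:\mathds{S}^{2,\beta}\to\mathds{S}^{2,\beta}$ as follows: given $U\in\mathds{S}^{2,\beta}$, freeze the driver along $U$ — but note that since $f$ depends on $Z$ as well, one must freeze along a pair. The cleanest route is to work on the product space $\mathds{S}^{2,\beta}\times\mathds{H}^{2,\beta}$: given $(U,V)$ in this space, set $g(\omega,s):=f(\omega,s,U_s,V_s(\cdot))$, which is a driver independent of $(y,z)$ satisfying $E[\int_0^T g^2(s)\,ds]<\infty$ thanks to the Lipschitz property of $f$ and the integrability of $U$ and $V$; then invoke Lemma~\ref{lemme3} to get a unique solution $(Y,Z,A,C)$ of the reflected BSDE with data $(g,\xi)$, and define $\Phi(U,V):=(Y,Z)$. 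The fixed point of $\Phi$ together with its associated $(A,C)$ is exactly the sought solution.

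The second and central step is the contraction estimate. Taking two inputs $(U^1,V^1)$ and $(U^2,V^2)$ with outputs $(Y^i,Z^i,A^i,C^i)$, the corresponding frozen drivers are $f_i(\omega,s)=f(\omega,s,U^i_s,V^i_s(\cdot))$, and Lemma~\ref{lemme2} gives
\begin{equation*}
\|Z^1-Z^2\|^2_{\mathds{H}^{2,\beta}}+\||Y^1-Y^2\||^2_{\mathds{S}^{2,\beta}}\leq C\epsilon^2\,E\Bigl[\int_0^T e^{\beta s}\bigl(f_1(s)-f_2(s)\bigr)^2\,ds\Bigr]
\end{equation*}
for a universal constant $C$ (absorbing the $2(1+2c^2)$ and the coefficient of the first estimate). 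Using the Lipschitz inequality~\eqref{moneq0} to bound $(f_1(s)-f_2(s))^2$ by $2L^2\bigl(|U^1_s-U^2_s|^2+\|V^1_s-V^2_s\|^2_{\phi}\bigr)$ where $\|\cdot\|_\phi$ denotes the relevant $\phi_{\omega,s}$-seminorm, then applying Remark~\ref{remarque2} to the $U$-term and the definition of $\|\cdot\|_{\mathds{H}^{2,\beta}}$ to the $V$-term, one gets a bound of the form $C'L^2\epsilon^2\,(1\vee T)\bigl(\||U^1-U^2\||^2_{\mathds{S}^{2,\beta}}+\|V^1-V^2\|^2_{\mathds{H}^{2,\beta}}\bigr)$. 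Choosing $\epsilon$ small (equivalently $\beta$ large, since Lemma~\ref{lemme2} requires $\beta>1/\epsilon^2$) so that $C'L^2\epsilon^2(1\vee T)<1$ makes $\Phi$ a strict contraction on the Banach space $\mathds{S}^{2,\beta}\times\mathds{H}^{2,\beta}$ (Banach-ness of $\mathds{S}^{2,\beta}$ is Remark~\ref{Banach}; for $\mathds{H}^{2,\beta}$ it is standard). The Banach fixed-point theorem then yields a unique fixed point $(Y,Z)$, and feeding $g(\omega,s)=f(\omega,s,Y_s,Z_s(\cdot))$ back into Lemma~\ref{lemme3} produces the associated $(A,C)$ satisfying the Skorokhod and minimality conditions~\eqref{moneq6}--\eqref{moneq7}; uniqueness of the full quadruple follows because any solution gives a fixed point of $\Phi$.

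For the last assertion — that $A$ is continuous when $\xi$ is left upper-semicontinuous along stopping times — I would argue as follows. From the optimal-stopping representation in Lemma~\ref{lemme3} applied to the frozen driver at the fixed point, the process $\tilde Y_t:=Y_t+\int_0^t f(s,Y_s,Z_s(\cdot))\,ds$ is a strong supermartingale, and its Mertens decomposition identifies $A+C$ (up to the martingale part) as the nondecreasing process; the predictable part $A$ has its jumps governed by the second Skorokhod condition in~\eqref{moneq6}, namely $(Y_{\tau-}-\xi_{\tau-})(A^d_\tau-A^d_{\tau-})=0$ for every predictable stopping time $\tau$. The key point is that at a predictable time $\tau$ where $A$ jumps, one must have $Y_{\tau-}=\xi_{\tau-}$; but by the general theory of optimal stopping (the left-jump of the value process occurs only where the obstacle has an upward left-jump, i.e. $Y_{\tau-}=\xi_{\tau-}\vee(\limsup\text{ from the left})$), left upper-semicontinuity of $\xi$ along stopping times forces $\xi_{\tau-}\geq\limsup_{s\uparrow\tau}\xi_s$, which rules out the configuration that produces a left-jump of $A$; hence $A^d\equiv 0$ and $A=A^c$ is continuous. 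I expect this last step to be the main obstacle: it requires carefully transcribing the left-regularity results for the Snell envelope (from the references \cite{KobylanskiQuenez2012} and \cite{Ouknine2015} cited after Proposition~\ref{proposition1}) into the present multivariate-point-process setting, and checking that the predictable pushing process $A$ inherits continuity from the absence of left-jumps of $Y$ at predictable times — i.e. that the decomposition cleanly separates the predictable jumps (into $A$) from the totally inaccessible ones (into the martingale part, since the filtration is quasi-left-continuous) and the right-jumps (into $C$).
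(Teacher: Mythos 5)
Your fixed-point argument is essentially identical to the paper's proof: the same contraction map $\Phi$ on the Banach space $\mathds{S}^{2,\beta}\times\mathds{H}^{2,\beta}$, built from Lemma~\ref{lemme3} for the frozen driver, with the contraction constant obtained from Lemma~\ref{lemme2}, the Lipschitz bound~\eqref{moneq0} and Remark~\ref{remarque2}. The paper's proof in fact stops there and never addresses the final assertion about continuity of $A$ under left upper-semicontinuity of $\xi$, so your sketch of that step goes beyond what the paper provides; note only that the correct reading of the hypothesis is $\xi_{\tau-}\leq\xi_{\tau}$ at predictable $\tau$ (not the inequality $\xi_{\tau-}\geq\limsup_{s\uparrow\tau}\xi_s$ you wrote, which is automatic for a left-limited process), and the standard conclusion is $\xi_{\tau-}=Y_{\tau-}\geq Y_{\tau}+\Delta A_{\tau}\geq\xi_{\tau}+\Delta A_{\tau}\geq\xi_{\tau-}+\Delta A_{\tau}$, forcing $\Delta A_{\tau}=0$.
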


\begin{proof}
For each $\beta>0$ and  since $(\mathds{S}^{2,\beta},\||.|\|_{\mathds{S}^{2,\beta}})$ and $(\mathds{H}^{2,\beta},\|.\|_{\mathds{H}^{2,\beta}})$ are Banach spaces (by Remark$~\ref{Banach}$) it follows that $(\mathds{S}^{2,\beta}\times\mathds{H}^{2,\beta}, \|(.,.)\|_{2,\beta})$ is a Banach space with:
$$\|(Y,Z(.))\|^{2}_{2,\beta}=\||Y|\|^{2}_{\mathds{S}^{2,\beta}}+\|Z(.)\|^{2}_{\mathds{H}^{2,\beta}}$$
 for all $(Y,Z)\in\mathds{S}^{2,\beta}\times\mathds{H}^{2,\beta}$. After, we define an application
$\Phi$ from $\mathds{S}^{2,\beta}\times\mathds{H}^{2,\beta}$  into itself as follows: for a given $(y,z)\in\mathds{S}^{2,\beta}\times\mathds{H}^{2,\beta}$, we
set $(Y,Z)=\Phi(y,z)$ where $(Y,Z)$ the first
two  components of the solution to the reflected BSDE associated with driver
$f:=f(t,y_t,z_t)$ and with obstacle $\xi_t$. Let $(A,C)$
be the associated Mertens process, constructed as in
lemma$~\ref{lemme3}$. The mapping $\Phi$ is well-defined by
Lemma$~\ref{lemme3}$.

Let $(y,z)$ and $(y',z')$ be two elements of
$\mathds{S}^{2,\beta}\times\mathds{H}^{2,\beta}$. We set $(Y,Z)=\Phi(y,z)$
 and  $(Y',Z')=\Phi(y',z')$. We also set $\widetilde{Y}=Y-Y'$, $\widetilde{Z}=Z-Z'$,
$\widetilde{y}=y-y'$ and  $\widetilde{z}=z-z'$.

Let us prove that for a suitable choice of the parameter $\beta>0$, the mapping
$\Phi$ is a contraction from the Banach space
$\mathds{S}^{2,\beta}\times\mathds{H}^{2,\beta}$ into itself. Indeed, By applying
Lemma$~\ref{lemme2}$, we have, for all $\epsilon>0$ and for all
$\beta\geq\frac{1}{\epsilon^{2}}$:
\begin{eqnarray*}
\||\widetilde{Y}|\|_{\mathds{S}^{2,\beta}}^{2}+\|\widetilde{Z}\|^{2}_{\mathds{H}^{2,\beta}}\leq
\epsilon^{2}(3+4c^{2})E\Bigl[\int_{0}^{T}e^{\beta s}(f(s,y,z)-f(s,y',z'))^{2}ds\Bigr].
\end{eqnarray*}
By using the Lipschitz property of $f$ and the fact that
$(a+b)^2\leq2a^2+2b^2$, for all $(a,b)\in\R^{2}$, we obtain
\begin{multline}
E\Bigl[\int_{0}^{T}e^{\beta s}(f(s,y,z)-f(s,y',z'))^{2}ds\Bigr]\leq C_{L}(E\Bigl[\int_{0}^{T}e^{\beta s}|\widetilde{y}|^{2}ds \Bigr]+ \\
E\Bigl[\int_{0}^{T}e^{\beta s} \int_{\mathcal{U}}|\widetilde{z}_{s}(x)-\hat{z}_{s}|^{2}\nu(dx,ds)+\sum_{0< s\leq T}e^{\beta s}|\hat{z}_{s}|^{2}(1-\triangle K_{s})\Bigr])
\end{multline}
where $\hat{z}_{t}=\int_{\mathcal{U}}\widetilde{z}_{t}(x)\nu(\{t\},dx)$, and  $C_{L}$ is a positive constant depending on the Lipschitz constant
$L$ only. Thus, for all $\epsilon>0$ and for all
$\beta\geq\frac{1}{\epsilon^{2}}$ we have:
$$\||\widetilde{Y}|\|_{\mathds{S}^{2,\beta}}^{2}+\|\widetilde{Z}\|^{2}_{\mathds{H}^{2,\beta}}\leq
    \epsilon^{2}C_{L}(3+4c^{2})\Bigl(\|\widetilde{y}\|_{\mathds{S}^{2,\beta}}^{2}+\|\widetilde{z}\|_{\mathds{H}^{2,\beta}}^{2}\Bigr).$$
The previous inequality, combined with in Remark$~\ref{remarque2}$, gives
$$\||\widetilde{Y}|\|_{\mathds{S}^{2,\beta}}^{2}+\|\widetilde{Z}\|^{2}_{\mathds{H}^{2,\beta}}\leq
    \epsilon^{2}C_{L}(3+4c^{2})(T+1)\Bigl(\||\widetilde{y}|\|_{\mathds{S}^{2,\beta}}^{2}+\|\widetilde{z}\|_{\mathds{H}^{2,\beta}}^{2}\Bigr)$$
Thus, for $\epsilon>0$ such that
$\epsilon^{2}C_{L}(3+4c^{2})(T+1)<1$ and $\beta\geq0$ such that
$\beta\geq\frac{1}{\epsilon^{2}}$, the mapping $\Phi$ is a
contraction. By the Banach fixed-point theorem, we get that $\Phi$
has a unique fixed point in $\mathds{S}^{2,\beta}\times\mathds{H}^{2,\beta}$. We thus have
the existence and uniqueness of the solution to the reflected BSDE.
\end{proof}
\begin{remark}
Let $\mu$ be an integer-valued random measure on $\R_{+}\times\mathcal{U}$ not
necessarily discrete. Then $\nu$ the predictable compensator of the measure $\mu$, relative to the filtration $(\mathcal{F}_{t})_{(t\geq 0)}$ can be disintegrated as follows
$$
\nu(\omega,dt,dx)=dK_{t}(\omega)\phi_{\omega,t}(dx),
$$
where $K$ is a right-continuous nondecreasing predictable process such that $K_{0}=0$, but
$\phi$  is in general only a transition measure (instead of transition probability)  from $(\Omega\times[0,T],\mathcal{P})$ into $(\mathcal{U},\mathcal{E})$. Notice that when $\mu$ is discrete one can choose $\phi$ to be a transition probability, therefore  $\phi(\mathcal{U})$ = 1 and $\nu(\{t\}\times \mathcal{U})=\Delta K_{t}$ (a property used in the
previous sections). When $\mu$ is not discrete, let us suppose that $\nu^{d}$ (the discontinuous part of $\nu$) can be disintegrated as follows
\begin{equation}
\nu^{d}(\omega,dt,dx)=\Delta K_{t}(\omega)\phi^{d}_{\omega,t}(dx),\,\,\,\,\ \phi^{d}_{\omega,t}(\mathcal{U})=1
 \label{nu}
\end{equation}
where $\phi^{d}$  is a transition probability from $(\Omega\times[0,T],\mathcal{P})$ into $(\mathcal{U},\mathcal{E})$. In particular $\nu^{d}(\{t\}\times \mathcal{U})=\Delta K_{t}$. Then, when $~\eqref{nu}$ and a martingale representation theorem for $\mu$ hold, all
the results of this paper are still valid and can be proved proceeding along the same
lines.
\end{remark}
\subsection{Comparison theorem for reflected BSDEs.}

In this section we will  provide a comparison result between solutions of  reflected BSDEs
which is an important  theorem. So assume there exists two solutions $(Y^i, Z^i, A^i, C^i)\,\,\,\,\ i=1,2$ of $~\eqref{moneq4}$  associated with $(\xi^i, f_{i})\,\,\,\,\ i=1,2$. Next, we show that $(Y^{1}_{t}-Y^{2}_{t})^{+}\in\mathds{S}^{2,0}$ where $(Y^{1}-Y^{2})^{+}=sup(Y^{1}-Y^{2},0)$.

\begin{theorem}
\label{comparison1}
Let $(Y^i, Z^i, A^i, C^i)$ be a solution of reflected BSDE with parameters $(\xi^i, f_i)$, $i=1,2$. Assume that $\xi^1\leq \xi^2$, for a.s. $t\in[0,T]$ and either
\begin{equation}
f_{1}\,\,\,\ be \,\,\,\ Lipschitz \,\,\,\ and \,\,\,\ 1_{\{Y^{1}_{t-}>Y^{2}_{t-}\}}(f_{1}(t,Y^{1}_{t},Z^{1}_{t})- f_{2}(t, Y^{1}_{t},Z^{1}_{t}))\leq 0
\end{equation}
for a.s. $t\in[0,T]$ or
\begin{equation}
f_{2}\,\,\,\ be \,\,\,\ Lipschitz \,\,\,\ and \,\,\,\ \ 1_{\{Y^{1}_{t-}>Y^{2}_{t-}\}}(f_{1}(t,Y^{2}_{t},Z^{2}_{t})- f_{2}(t, Y^{2}_{t},Z^{2}_{t}))\leq 0
\end{equation}
for a.s.  $t\in[0, T]$. Then $$Y^{ 1}_{ t} \leq Y^{ 2}_{ t }, \,\,\,\ for \,\,\,\ all \,\,\,\ t\in[0,T]\,\,\,\ a.s.$$
\end{theorem}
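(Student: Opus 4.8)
The plan is to mimic the classical proof of the comparison theorem for (reflected) BSDEs, but carried out in the optional/strong-supermartingale framework: apply the Gal'chouk--Lenglart It\^o formula to $e^{\beta t}((Y^1_t-Y^2_t)^+)^2$, show that all the ``bad'' terms (the reflection increments, the right-jump contributions, the jump-square sums) have a favorable sign, and conclude by a Gronwall-type argument. First I would set $\widetilde Y:=Y^1-Y^2$, $\widetilde Z:=Z^1-Z^2$, $\widetilde A:=A^1-A^2$, $\widetilde C:=C^1-C^2$ and write down, as in \eqref{moneq9}, the optional semimartingale decomposition of $\widetilde Y$ on $[\tau,T]$, noting $\widetilde Y_T=\xi^1_T-\xi^2_T\leq 0$, hence $(\widetilde Y_T)^+=0$. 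I would then argue that $(\widetilde Y)^+\in\mathds S^{2,0}$ (so that the stochastic integral against $\mu-\nu$ that appears has zero expectation, exactly as in the proof of Lemma~\ref{lemme2}, using Proposition~\ref{propA1} and the BDG inequality).

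Next I would apply Theorem~\ref{galchouklenglart} and Corollary~\ref{corollaire1} to $F(x,y)=x\,(y^+)^2$ with $x=e^{\beta t}$, $y=\widetilde Y_t$; note $F$ is $C^1$ in $y$ with $F_y(x,y)=2x\,y^+$, which is enough for the Gal'chouk--Lenglart formula, and $y\mapsto (y^+)^2$ is convex so its second-difference/measure terms are nonnegative, which is exactly the sign we want on the left. The crucial sign analysis is for the three families of ``driving'' increments:
\begin{itemize}
  \item the $d\widetilde A^c$, $d\widetilde A^d$ terms: on $\{Y^1_{s-}>Y^2_{s-}\}$ one has $Y^1_{s-}>\xi^1_{s-}$, so by the Skorokhod condition \eqref{moneq6} for $A^1$ the integrand against $dA^{1,c}$ (resp. the jump of $A^{1,d}$) vanishes, while $dA^2$ is nondecreasing and multiplied by $\widetilde Y^+_{s-}\ge 0$ with the correct sign to be subtracted; on $\{Y^1_{s-}\le Y^2_{s-}\}$ the factor $\widetilde Y^+_{s-}=0$. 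Hence $\int_\tau^T e^{\beta s}\widetilde Y^+_{s-}\,d\widetilde A_s\le 0$.
  \item the $d\widetilde C$ term: on $\{Y^1_s>Y^2_s\}$ we get $Y^1_s>\xi^1_s$, so \eqref{moneq7} for $C^1$ kills the jump of $C^1$, and $-dC^2$ is nonpositive; on $\{Y^1_s\le Y^2_s\}$ the factor $\widetilde Y^+_s=0$. So the $C$-contribution, $2\int_\tau^T e^{\beta s}\widetilde Y^+_{s}\,d\widetilde C_s$, is $\le 0$. (This is the place where the minimality conditions on $A$ and $C$ are genuinely used, and it is the step I expect to require the most care, precisely because $Y$ is only l\`adl\`ag and one must treat the continuous part, the left jumps at predictable times, and the right jumps at arbitrary stopping times separately.)
  \item the quadratic-variation and jump-square terms: $-\int e^{\beta s}d\langle M,M\rangle_s$ together with $-\sum(\cdots)^2$ are $\le 0$ up to a $\widetilde Z$-estimate; in the usual way one pairs the $\widetilde Z$-quadratic term against the Lipschitz-driver term using $2ab\le a^2/\epsilon^2+\epsilon^2 b^2$.
\end{itemize}

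For the driver term I would use whichever of the two hypotheses is assumed, say the first: writing $f_1(s,Y^1_s,Z^1_s)-f_2(s,Y^2_s,Z^2_s)=[f_1(s,Y^1_s,Z^1_s)-f_2(s,Y^1_s,Z^1_s)]+[f_2(s,Y^1_s,Z^1_s)-f_2(s,Y^2_s,Z^2_s)]$; on $\{Y^1_{s-}>Y^2_{s-}\}$ the first bracket is $\le 0$ by assumption, so multiplied by $\widetilde Y^+_{s-}$ it is $\le 0$, while the second bracket is controlled by the Lipschitz constant $L$ times $(|\widetilde Y_s|+\|\widetilde Z_s\|)$ and absorbed, after a Young inequality, partly into the $\beta\int e^{\beta s}(\widetilde Y^+_s)^2$ term and partly into the $\widetilde Z$-quadratic term on the left. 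Choosing $\beta$ large enough (larger than $2L+1$, say) makes the coefficient of $\int_\tau^T e^{\beta s}(\widetilde Y^+_s)^2\,ds$ nonpositive and the coefficient of the $\widetilde Z$-term nonnegative. Taking expectations at $t=\tau$, the stochastic integral drops out, and one is left with $E[e^{\beta\tau}(\widetilde Y^+_\tau)^2]\le 0$ for every $\tau\in\mathcal T_{0,T}$; hence $\widetilde Y^+_\tau=0$ a.s. for all such $\tau$, and by Proposition~\ref{AshkanNikeghbali} (the optional section theorem) $\widetilde Y^+\equiv 0$ up to an evanescent set, i.e. $Y^1_t\le Y^2_t$ for all $t\in[0,T]$ a.s. The second hypothesis is handled symmetrically by expanding around $(Y^2,Z^2)$ instead. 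The main obstacle, as indicated, is the careful verification that the reflection terms from both $A$ and $C$ have the right sign when $Y$ is merely right-u.s.c. and l\`adl\`ag, which is exactly why the minimality conditions in Definition~\ref{definition1} are stated in terms of left limits for $A^d$ at predictable times and in terms of $Y_\tau$ (not $Y_{\tau-}$) for $C$.
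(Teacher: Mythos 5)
Your overall strategy is the same as the paper's: apply a change-of-variables formula to $((Y^1-Y^2)^+)^2$ on $[\tau,\sigma]$, check that the reflection increments from $A$ and $C$ and the jump-square terms have a favourable sign via the Skorokhod/minimality conditions \eqref{moneq6}--\eqref{moneq7}, absorb the Lipschitz part of the driver by Young's inequality, kill the stochastic integral in expectation, and conclude with Gronwall and the section theorem (Proposition~\ref{AshkanNikeghbali}). The paper works without the $e^{\beta t}$ weight and localizes the martingale by a fundamental sequence before applying Gronwall, whereas you weight by $e^{\beta t}$ and choose $\beta$ large; these are interchangeable.

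There is, however, one concrete gap in the proposal as written. You propose to apply Theorem~\ref{galchouklenglart} to $F(x,y)=x(y^+)^2$ and assert that $C^1$ regularity in $y$ ``is enough.'' It is not: the Gal'chouk--Lenglart formula as stated requires $F$ twice continuously differentiable, and $y\mapsto(y^+)^2$ fails this at $y=0$; moreover the formula's compensated-jump sums are second-order Taylor remainders whose control genuinely uses $D^2F$. What is needed is a Tanaka/Meyer--It\^o type \emph{inequality} for convex functions of optional (l\`adl\`ag) semimartingales, and this is precisely what the paper invokes as Corollary~\ref{corollaire2} (derived from \cite{Lenglart1980} and \cite[Corollary 5.5]{Klimsiak2018}); your convexity remark points in the right direction but does not by itself license the formula. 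A second, smaller wrinkle: under the theorem's first hypothesis you split $f_1(Y^1,Z^1)-f_2(Y^2,Z^2)$ so that the residual bracket $f_2(s,Y^1,Z^1)-f_2(s,Y^2,Z^2)$ must be estimated, which requires $f_2$ Lipschitz, whereas hypothesis (1) only grants Lipschitz continuity of $f_1$; the decomposition has to be arranged so that the Lipschitz driver is the one whose two evaluations differ (the paper's own displayed estimate has the analogous pairing issue, so this is worth flagging rather than fatal). With Corollary~\ref{corollaire2} in place of the direct It\^o application and the driver decomposition matched to the assumed Lipschitz driver, your argument goes through and coincides with the paper's.
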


\begin{proof}
Set $Z=Z^{1}-Z^{2}$, $A=A^{1}-A^{2}$ and $C=C^{1}-C^{2}$. By  $~\ref{moneq4}$, for $\tau,\sigma\in\mathcal{T}_{0,T}$ such that $\tau \leq \sigma$ we have
$$
Y^{1}_{\tau}-Y^{2}_{\tau}=Y^{1}_{\sigma}-Y^{2}_{\sigma} +\int_{\tau}^{\sigma}(f_{1}(s,Y^{1}_{s},Z^{1}_{s}(.))-f_{2}(s,Y^{2}_{s},Z^{2}_{s}(.)))ds-\int_{\tau}^{\sigma}\int_{\mathcal{U}}Z_{s}(x)(\mu-\nu)(dx,ds)
+A_{\sigma}-A_{\tau}+C_{\sigma-}-C_{\tau-}
$$

By Corollary$~\ref{corollaire2}$,  we obtain

\begin{multline}
  ((Y^{1}_{\tau}-Y^{2}_{\tau})^{+})^{2}+\int_{\tau}^{\sigma}\int_{\mathcal{U}}1_{\{Y^{1}_{s}>Y^{2}_{s}\}}|Z_{s}(x)-\hat{Z}_{s}|^{2}\nu(dx,ds)+\sum_{\tau< s\leq\sigma}1_{\{Y^{1}_{s}>Y^{2}_{s}\}}|\hat{Z}_{s}|^{2}(1-\triangle K_{s})\\
  \leq  ((Y^{1}_{\sigma}-Y^{2}_{\sigma})^{+})^{2}+ 2\int_{\tau}^{\sigma}(Y^{1}_{s-}-Y^{2}_{s-})^{+}(f_{1}(s,Y^{1}_{s},Z^{1}_{s})- f_{2}(s, Y^{2}_{s},Z^{2}_{s}))ds\\
  + 2\int_{\tau}^{\sigma}(Y^{1}_{s-}-Y^{2}_{s-})^{+}dA_{s}-
  2\int_{\tau}^{\sigma}\int_{\mathcal{U}}(Y^{1}_{s-}-Y^{2}_{s-})^{+}Z_{s}(x)(\mu-\nu)(dx,ds)
  -2\sum_{\tau\leq s<\sigma}(Y^{1}_{s}-Y^{2}_{s})^{+}\Delta C_{s}
  \label{comparison2}
\end{multline}
where $\Delta C_{s}=C_{s}-C_{s-}=-(Y_{s+}-Y_{s})$ and $\hat{Z}_{t}=\int_{\mathcal{U}}Z_{t}(x)\nu(\{t\},dx)$. By using the theorem assumptions and  Lipschitz condition of driver $f$ and $2ab\leq(2a)^{2}+(\frac{b}{2})^{2}$, we have

\begin{multline}
2(Y^{1}_{s-}-Y^{2}_{s-})^{+}(f_{1}(s,Y^{1}_{s},Z^{1}_{s})-f_{2}(s, Y^{2}_{s},Z^{2}_{s}))ds \leq 2(Y^{1}_{s-}-Y^{2}_{s-})^{+}(f_{1}(s,Y^{1}_{s},Z^{1}_{s})- f_{1}(s, Y^{2}_{s},Z^{2}_{s}))ds\\
\leq 2L((Y^{1}_{s}-Y^{2}_{s})^{+})^{2} +
2L(Y^{1}_{s}-Y^{2}_{s})^{+}\Bigl(\int_{\mathcal{U}}|Z_{s}(x)-\hat{Z}_{s}|^{2}\nu(ds,dx)
+|\hat{Z}_{s}|^{2}(1-\triangle K_{t})\Bigr)^{\frac{1}{2}}\\
\leq (2L+4L^{2})((Y^{1}_{s}-Y^{2}_{s})^{+})^{2}+
\frac{1}{4}\Bigl(1_{\{Y^{1}_{s}>Y^{2}_{s}\}}\int_{\mathcal{U}}|Z_{s}(x)-\hat{Z}_{s}|^{2}\nu(ds,dx)+1_{\{Y^{1}_{s}>Y^{2}_{s}\}}|\hat{Z}_{s}|^{2}(1-\triangle K_{t})\Bigr)
  \label{comparison3}
\end{multline}

Let's go back to $~\eqref{comparison2}$, its clear that $-2\sum_{\tau\leq s<\sigma}(Y^{1}_{s}-Y^{2}_{s})^{+}\Delta C_{s}\leq0$ and observe that in $\{Y^{1}_{s-}>Y^{2}_{s-}\}$ we have
$Y^{1}_{s-}>\xi^{1}_{s-}$ for a.e. $s\in[0,T]$ then
$$\int_{\tau}^{\sigma}(Y^{1}_{s-}-Y^{2}_{s-})^{+}dA_{s}=\int_{\tau}^{\sigma}1_{\{Y^{1}_{s-}>Y^{2}_{s-}\}}(Y^{1}_{s-}-Y^{2}_{s-})^{+}\mid
Y^{1}_{s-}-\xi^{1}_{s-}\mid^{-1}(Y^{1}_{s-}-\xi^{1}_{s-})dA_{s}$$
Hence by condition $~\eqref{moneq6}$
$\int_{\tau}^{\sigma}(Y^{1}_{s-}-Y^{2}_{s-})^{+}dA_{s}\leq 0$, $t\in[0,\sigma]$  of the Definition$~\ref{definition1}$. From this, $~\ref{comparison2}$ and $~\ref{comparison3}$  it follows that

\begin{eqnarray*}
  ((Y^{1}_{\tau}-Y^{2}_{\tau})^{+})^{2}&+& \frac{3}{4}\Bigl(\int_{\tau}^{\sigma}\int_{\mathcal{U}}1_{\{Y^{1}_{s}>Y^{2}_{s}\}}|Z_{s}(x)-\hat{Z}_{s}|^{2}\nu(dx,ds)+\sum_{\tau< s\leq\sigma}1_{\{Y^{1}_{s}>Y^{2}_{s}\}}|\hat{Z}_{s}|^{2}(1-\triangle K_{s})\Bigr)\\
  &\leq&  ((Y^{1}_{\sigma}-Y^{2}_{\sigma})^{+})^{2}+ (2L+4L^{2})\int_{\tau}^{\sigma}((Y^{1}_{s}-Y^{2}_{s})^{+})^{2}ds\\
&-& 2\int_{\tau}^{\sigma}\int_{\mathcal{U}}(Y^{1}_{s-}-Y^{2}_{s-})^{+}Z_{s}(x)(\mu-\nu)(dx,ds)
\end{eqnarray*}

which implies that
$$
  ((Y^{1}_{\tau}-Y^{2}_{\tau})^{+})^{2}\leq ((Y^{1}_{\sigma}-Y^{2}_{\sigma})^{+})^{2}+ (2L+4L^{2})\int_{\tau}^{\sigma}((Y^{1}_{s}-Y^{2}_{s})^{+})^{2}ds-(M_{\sigma}-M_{\tau})
$$
Let $\{\sigma_{n}\}\in\mathcal{T}_{0,T}$ be a fundamental sequence for the local martingale $M$. Changing $\sigma_{n}$ with $\sigma$ in the above inequality, taking expected value
$$
  E((Y^{1}_{\tau}-Y^{2}_{\tau})^{+})^{2}\leq E((Y^{1}_{\sigma_{n}}-Y^{2}_{\sigma_{n}})^{+})^{2}+ (2L+4L^{2})E\int_{\tau}^{\sigma_{n}}((Y^{1}_{s}-Y^{2}_{s})^{+})^{2}ds
$$
 and passing to the limit with $n\longrightarrow \infty$ gives
$$
  E((Y^{1}_{\tau}-Y^{2}_{\tau})^{+})^{2}\leq(2L+4L^{2})E\int_{\tau}^{T}((Y^{1}_{s}-Y^{2}_{s})^{+})^{2}ds
$$
so applying the Gronwall’s lemma we get $E((Y^{1}_{\tau}-Y^{2}_{\tau})^{+})^{2}=0$. By Proposition$~\ref{AshkanNikeghbali}$ (The Section Theorem), $(Y^{1}_{t}-Y^{2}_{t})^{+}=0$, $t\in[0,T]$. And  the desired result yields.
\end{proof}

\begin{remark}
Let $f_1$, $f_2$, $\xi^{1}$, $\xi^{2}$ be satisfying the same
assumptions as in Theorem$~\ref{comparison1}$. Then $(Y^{1}_{t}-Y^{2}_{t})^{+}\in\mathds{S}^{2,0}$.

\end{remark}
\begin{proof}
Set $Z=Z^{1}-Z^{2}$, $A=A^{1}-A^{2}$ and $C=C^{1}-C^{2}$.  By the assumptions and the Tanaka formula for optional
semimartingales (see \cite[Section 3, page 538]{Lenglart1980}) applied to $(Y^{1}_{t}-Y^{2}_{t})^{+}$  (see also \cite[Theorem 4.1]{baadi2018} for the a general filtration), for every $\tau\in\mathcal{T}_{0,T}$ we have
\begin{eqnarray*}
  (Y^{1}_{\tau}-Y^{2}_{\tau})^{+} &\leq&
  (\xi^{1}-\xi^{2})^{+}+\int_{\tau}^{T}1_{\{Y^{1}_{s-}>Y^{2}_{s-}\}}(f^{1}(s,Y^{1},Z^{1})-f^{2}(s,Y^{2},Z^{2}))ds \\ &+& \int_{\tau}^{\tau}1_{\{Y^{1}_{s-}>Y^{2}_{s-}\}}dA_{s} +\int_{\tau}^{T}1_{\{Y^{1}_{s-}>Y^{2}_{s-}\}}dC_{s-}\\ &-& \int_{\tau}^{T}\int_{\mathcal{U}}1_{\{Y^{1}_{s-}>Y^{2}_{s-}\}}Z_{s}(x)(\mu-\nu)(dx,ds).
\end{eqnarray*}
By the same way in \ref{comparison1}, we have

\begin{eqnarray*}
  (Y^{1}_{\tau}-Y^{2}_{\tau})^{+} &\leq &
L\int_{\tau}^{T}1_{\{Y^{1}_{s-}>Y^{2}_{s-}\}}(Y^{1}_{s}-Y^{2}_{s})ds-\int_{\tau}^{T}\int_{\mathcal{U}}1_{\{Y^{1}_{s-}>Y^{2}_{s-}\}}Z_{s}(x)(\mu-\nu)(dx,ds)\\
&+& L\int_{\tau}^{T}1_{\{Y^{1}_{s-}>Y^{2}_{s-}\}}\Bigl(\int_{\mathcal{U}}
|Z_{s}(x)-\hat{Z}_{s}|^{2}\nu(ds,dx)
+ |\hat{Z}_{s}|^{2}(1-\triangle K_{s})\Bigr)^{\frac{1}{2}}
\end{eqnarray*}
By taking conditional expectation we get
\begin{eqnarray*}
  (Y^{1}_{\tau}-Y^{2}_{\tau})^{+} \leq
LE\Bigl(\int_{\tau}^{T}1_{\{Y^{1}_{s-}>Y^{2}_{s-}\}}(Y^{1}_{s}-Y^{2}_{s})ds &+&
\int_{\tau}^{T}1_{\{Y^{1}_{s-}>Y^{2}_{s-}\}}\Bigl(\int_{\mathcal{U}}
|Z_{s}(x)-\hat{Z}_{s}|^{2}\nu(ds,dx)\\
&+& |\hat{Z}_{s}|^{2}(1-\triangle K_{s})\Bigr)^{\frac{1}{2}}|\mathcal{F}_{\tau}\Bigr)
\end{eqnarray*}
Hence
$$
((Y^{1}_{\tau}-Y^{2}_{\tau})^{+})^{2} \leq
C_{T,L}E\Bigl(\int_{0}^{T}(Y^{1}_{s}-Y^{2}_{s})^{2}ds+
\int_{0}^{T}\int_{\mathcal{U}}|Z_{s}(x)-\hat{Z}_{s}|^{2}\nu(ds,dx)
+\sum_{0\leq s \leq T}|\hat{Z}_{s}|^{2}(1-\triangle K_{s})|\mathcal{F}_{\tau}\Bigr)
$$
where $C_{T,L}$ is a positive constant depending only on the Lipschitz constant $L$ and $T$.  By Doob’s inequality, taking the essential supremum over $\tau\in\mathcal{T}_{0,T}$, and expectation  we get
\begin{eqnarray*}
  E\Bigl(ess\sup_{\tau\in\mathcal{T}_{0,T}}((Y^{1}_{\tau}-Y^{2}_{\tau})^{+})^{2}\Bigr)&\leq&
C_{T,L}E\Bigl(\int_{0}^{T}(Y^{1}_{s}-Y^{2}_{s})^{2}ds\Bigr)\\ &+&
C_{T,L}E\Bigl(\int_{0}^{T}\int_{\mathcal{U}}|Z_{s}(x)-\hat{Z}_{s}|^{2}\nu(ds,dx)
+\sum_{0\leq s \leq T}|\hat{Z}_{s}|^{2}(1-\triangle K_{s})\Bigr)
\end{eqnarray*}
and by Remark$~\ref{remarque2}$  we have $E\Bigl(\int_{0}^{T}(Y^{1}_{s}-Y^{2}_{s})^{2}ds\Bigr)\leq TE\Bigl(ess\sup_{\tau\in\mathcal{T}_{0,T}}(Y^{1}_{\tau}-Y^{2}_{\tau})^{2}\Bigr)$ then

$$
  E\Bigl(ess\sup_{\tau\in\mathcal{T}_{0,T}}((Y^{1}_{\tau}-Y^{2}_{\tau})^{+})^{2}\Bigr)\leq
C_{T,L}\||Y^{1}-Y^{2}\||_{\mathds{S}^{2,0}}^{2}+
C_{T,L}\|Z\|_{\mathds{H}^{2,0}}^{2}<\infty
$$
Hence, the desired result yields : $(Y^{1}_{t}-Y^{2}_{t})^{+}\in\mathds{S}^{2,0}$
\end{proof}

\section{Connection to the Optimal Stopping Problem}\label{sec3}

In this section, we provide some links between the reflected BSDE studied in the first part and an optimal stopping problem with running gains $f$ and non stopping reward $\xi$ . We are thus interested in the following optimal stopping problem
\begin{equation}
V(S)=ess\sup_{\tau\in\mathcal{T}_{S,T}}E\Bigl[\xi_{\tau}+\int_{S}^{\tau}
f(t,Y_{t},Z_{t}(.))dt|\mathcal{F}_{S} \Bigr] \,\,\,\ a.s.
\label{mono23}
\end{equation}
By Proposition$~\ref{proposition1}$ there exists a l\`{a}dl\`{a}g optional process $(V_{t})_{t\in[0,T]}$ which aggregates the family $(V(S))$ (i.e.$V_{S}=V(S)$ a.s., for all $S\in\mathcal{T}_{0,T}$).
Moreover, the process $(V_{t})_{t\in[0,T]}$ is the smallest strong supermartingale greater than or
equal to $(\xi_{t}+\int_{0}^{t}f(t,Y_{s},Z_{s}(.))ds)_{t\in[0,T]}$.\\
We show how what precedes can be put in use for proving that the solution $Y$ to the reflected BSDE from Definition$~\ref{definition1}$ with parameters the $(f, \xi)$ is also the value function of the  optimal stopping problem  $~\eqref{mono23}$ (i.e.  $Y_{S}=V_{S}$ a.s. for all $S\in\mathcal{T}_{0,T}$). We also investigate the question of the existence of an optimal stopping time under an additional  assumptions on the process $\xi$.

\begin{proposition}
\label{optimalprop1}
Let $f$ be a predictable Lipschitz driver. Let
$(\xi_{t}, 0 \leq t\leq T)$ be a left-limited r.u.s.c. process in $\mathds{S}^{2,0}$
\begin{enumerate}
  \item The solution to the reflected BSDE from Definition$~\ref{definition1}$ is a solution to the optimal stopping problem $$Y_{S}=ess\sup_{\tau\in\mathcal{T}_{S,T}}E\Bigl[\xi_{\tau}+\int_{S}^{\tau}
f(t,Y_{t},Z_{t}(.))dt|\mathcal{F}_{S} \Bigr]$$
  \item For each $S\in\mathcal{T}_{0,T}$ and each $\epsilon \geq0$, the stopping time $\tau_{S}^{\epsilon}=inf\{t\geq S: Y_{t}\leq\xi_{t}+\epsilon\}$  is $(\epsilon)$-optimal for problem $~\eqref{mono23}$, that is: $$Y_{S}\leq E\Bigl[\xi_{\tau_{S}^{\epsilon}}+\int_{S}^{\tau_{S}^{\epsilon}}f(t,Y_{t},Z_{t}(.))dt|\mathcal{F}_{S}\Bigr]+\epsilon$$
  \item If in addition $\xi$ is left u.s.c. process, then the stopping time $\tau_{S}^{*}=inf\{t\geq S: Y_{t}=\xi_{t}\}\wedge T$ is optimal and is the smallest of all optimal stopping times.
\end{enumerate}
\end{proposition}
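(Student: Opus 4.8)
The plan is to \emph{freeze the driver}: replace the Lipschitz driver by the $(\omega,s)$--driver $g(\omega,s):=f(\omega,s,Y_s(\omega),Z_s(\omega)(\cdot))$ and then invoke the optimal--stopping characterisation already available for such drivers in Lemma~\ref{lemme3} and Proposition~\ref{proposition1}. Write $M_t:=\int_0^t\!\int_{\mathcal U}Z_s(x)(\mu-\nu)(dx,ds)$, $\bar Y_t:=Y_t+\int_0^t g(s)\,ds$ and $\bar\xi_t:=\xi_t+\int_0^t g(s)\,ds$. For part~1, I would first verify that $g$ meets the hypotheses of Lemma~\ref{lemme3}: that $g$ is (up to an evanescent modification, since $Y_s=Y_{s-}$ for a.e.\ $s$) $Prog$--measurable --- which holds because $Y$ is optional, $Z$ predictable and $f$ is $Prog$--measurable --- and that $E[\int_0^T g^2(s)\,ds]<\infty$, the latter being the Lipschitz inequality \eqref{moneq0} taken at $(y',z')=(0,0)$ combined with $E[\int_0^T|f(s,0,0)|^2ds]<\infty$, $Y\in\mathds{S}^{2,0}$ (and Remark~\ref{remarque2}) and $Z\in\mathds{H}^{2,0}$. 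Since the conditions \eqref{moneq6}--\eqref{moneq7} of Definition~\ref{definition1} do not involve the driver, $(Y,Z,A,C)$ is a solution of the reflected BSDE with driver $g$ and obstacle $\xi$, hence \emph{the} solution by uniqueness in Lemma~\ref{lemme3}; then \eqref{mono20} reads $Y_S=\operatorname{ess\,sup}_{\tau\in\mathcal T_{S,T}}E[\xi_\tau+\int_S^\tau g(t)\,dt\mid\mathcal F_S]$, which is part~1 once $g(t)=f(t,Y_t,Z_t(\cdot))$ is substituted back, and also $Y_S=\xi_S\vee Y_{S+}$. In particular $\bar Y$ is then the smallest strong supermartingale dominating $\bar\xi$, and from \eqref{moneq4} it admits the Mertens--type decomposition $\bar Y_t=Y_0+M_t-A_t-C_{t-}$.

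Turning to part~2, fix $S$ and $\epsilon>0$ and set $\tau:=\tau_S^\epsilon$. The first point is $Y_\tau\le\xi_\tau+\epsilon$: if it failed we would have $Y_\tau>\xi_\tau+\epsilon\ge\xi_\tau$, so \eqref{moneq7} gives $\Delta C_\tau=0$, hence $Y_{\tau+}=Y_\tau$ (recall $\Delta C_t=Y_t-Y_{t+}$); but then $\tau$ is approached strictly from the right by times $t_n\downarrow\tau$ with $Y_{t_n}\le\xi_{t_n}+\epsilon$, and right upper--semicontinuity of $\xi$ forces $Y_\tau=Y_{\tau+}=\lim_n Y_{t_n}\le\limsup_n\xi_{t_n}+\epsilon\le\xi_\tau+\epsilon$, a contradiction. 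The second point is that $\bar Y$ is a genuine martingale on $[\![S,\tau]\!]$: on $[\![S,\tau[\![$ one has $Y_t>\xi_t+\epsilon\ge\xi_t$, so by \eqref{moneq6} the continuous part $A^c$ is constant there and by \eqref{moneq7} $C$ has no jump there; moreover $Y_{t-}\ge\xi_{t-}+\epsilon>\xi_{t-}$ for $S<t\le\tau$, so the second condition in \eqref{moneq6} kills every jump of the predictable process $A^d$ on $]\!]S,\tau]\!]$. Hence $A_\tau=A_S$ and $C_{\tau-}=C_{S-}$, so $\bar Y_\tau-\bar Y_S=M_\tau-M_S$; taking $E[\,\cdot\mid\mathcal F_S]$ (legitimate because $M$ is a square--integrable martingale, by part~1) and using $\bar Y_\tau\le\bar\xi_\tau+\epsilon$ yields $\bar Y_S\le E[\bar\xi_\tau\mid\mathcal F_S]+\epsilon$, which after cancelling $\int_0^S g$ is exactly part~2.

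For part~3, observe that $Y\ge\xi$ together with $Y_T=\xi_T$ (take $\tau=T$ in \eqref{moneq4}) force $\tau_S^*=\inf\{t\ge S:Y_t=\xi_t\}=\tau_S^0$, and that $\tau_S^\epsilon\uparrow\tau_S^0$ as $\epsilon\downarrow0$. Choosing $\epsilon_n\downarrow0$ in part~2 and letting $n\to\infty$ --- using left upper--semicontinuity of $\xi$ along stopping times (so $\limsup_n\xi_{\tau_S^{\epsilon_n}}\le\xi_{\tau_S^*}$ a.s.), reverse Fatou (domination by $\operatorname{ess\,sup}_\tau|\xi_\tau|\in L^2$) and dominated convergence for $\int_S^{\tau_S^{\epsilon_n}}g\,dt$ --- I would obtain $Y_S\le E[\xi_{\tau_S^*}+\int_S^{\tau_S^*}g\,dt\mid\mathcal F_S]$; the reverse inequality is part~1, so $\tau_S^*$ is optimal. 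For the minimality, if $\sigma\in\mathcal T_{S,T}$ is any optimal stopping time then $\bar Y_S=E[\bar\xi_\sigma\mid\mathcal F_S]\le E[\bar Y_\sigma\mid\mathcal F_S]\le\bar Y_S$ (supermartingale property of $\bar Y$ and $\bar Y_\sigma\ge\bar\xi_\sigma$), so $E[\bar Y_\sigma-\bar\xi_\sigma\mid\mathcal F_S]=0$ with nonnegative integrand, whence $Y_\sigma=\xi_\sigma$ a.s.; this forces $\sigma\ge\tau_S^*$ a.s.

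I expect the main obstacle to be the second point of part~2: upgrading the strong supermartingale $\bar Y$ to a \emph{true} martingale on $[\![S,\tau_S^\epsilon]\!]$. This requires the three Skorokhod/minimality conditions \eqref{moneq6}--\eqref{moneq7} to neutralise $A^c$, $A^d$ and $C$ simultaneously, and --- since $Y$ and $\xi$ are only right upper--semicontinuous --- a careful passage to left limits (where $\epsilon>0$ is genuinely used, namely to secure $Y_{t-}>\xi_{t-}$ and thus rule out jumps of $A^d$) and to right limits via $\Delta C_t=Y_t-Y_{t+}$ (to obtain the pointwise bound $Y_{\tau_S^\epsilon}\le\xi_{\tau_S^\epsilon}+\epsilon$). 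A shortcut, once part~1 has identified $\bar Y$ with the Snell envelope of the right upper--semicontinuous process $\bar\xi$, is simply to quote parts~2 and~3 from the optimal--stopping theory of \cite{Ouknine2015} and \cite{KobylanskiQuenez2012}.
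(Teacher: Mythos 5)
Your proposal is correct and, for parts 2 and 3, follows the same strategy as the paper: introduce the $\epsilon$-optimal time $\tau_S^\epsilon$, establish the two facts that $Y_{\tau_S^\epsilon}\le\xi_{\tau_S^\epsilon}+\epsilon$ and that $A+C_{-}$ is constant on $[S,\tau_S^\epsilon]$, then let $\epsilon\downarrow0$ using left upper-semicontinuity. The difference there is only that the paper cites these two facts as Lemma~\ref{lemmaprop2} (imported from \cite[Lemma 4.1]{Ouknine2015}) and cites \cite{ElKaroui1981} for ``optimal implies $Y_\sigma=\xi_\sigma$'', whereas you prove them: your right-limit argument via $\Delta C_\tau=Y_\tau-Y_{\tau+}$, your separate treatment of $A^{c}$, $A^{d}$ and $C$ through \eqref{moneq6}--\eqref{moneq7} (with $\epsilon>0$ securing $Y_{t-}>\xi_{t-}$), and your supermartingale argument for minimality are exactly the intended content of those citations. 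For part 1 you take a genuinely different route: the paper gets $Y_S\ge \mathrm{ess\,sup}(\cdots)$ directly from \eqref{moneq4} by conditioning and monotonicity of $A+C_{-}$, and the reverse inequality only as a byproduct of part 2; you instead freeze the driver into $g(\omega,s)=f(\omega,s,Y_s,Z_s(\cdot))$ and invoke the representation \eqref{mono20} of Lemma~\ref{lemme3} together with its uniqueness statement. This yields part 1 in one stroke (plus $Y_S=\xi_S\vee Y_{S+}$ for free) at the cost of the admissibility check for $g$; note that your bound $E[\int_0^Tg^2\,ds]<\infty$ converts the $\mathds{H}^{2,0}$-control of $Z$ (an integral against $\nu$, hence against $dK$) into an integral against Lebesgue measure $ds$ --- the same identification the paper itself makes in its contraction argument, so it is consistent with the paper's conventions, but it is the one step that leans on them. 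Finally, both you and the paper assert $\tau_S^\epsilon\to\tau_S^0$ as $\epsilon\downarrow0$ without proof; strictly one only gets a limit $\hat\tau\le\tau_S^0$, and closing the gap (optimality of $\hat\tau$ forces $Y_{\hat\tau}=\xi_{\hat\tau}$, hence $\hat\tau\ge\tau_S^0$) is implicit in both arguments, so this is a shared, easily repairable imprecision rather than a defect of your proof.
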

\begin{remark}
The proof of  the same result in the case of  $f$-conditional expectations (non linear conditional expectation) and Brownian–Poisson filtration is given in \cite[Theorem 4.2 and  Proposition 4.3]{Ouknine2015} ( see also \cite[Proposition 3.1]{foresta2019} for the càdlàg case).
\end{remark}

The next lemma will be used in the proof of this proposition which is given in \cite[Lemma 4.1]{Ouknine2015}.
\begin{lemma}
\label{lemmaprop2}
Let $\xi$ be a left-limited r.u.s.c. process in $\mathds{S}^{2,0}$ and  $f$ be a predictable Lipschitz driver. Let $(Y,Z,A,C)$ be the solution to the reflected BSDE with parameters $(f,\xi)$ as in Definition$~\ref{definition1}$. Then for a.e. $\omega\in\Omega$,
\begin{enumerate}
  \item $Y_{\tau_{S}^{\epsilon}}\leq \xi_{\tau_{S}^{\epsilon}}+\epsilon$
  \item The map $t\longmapsto A_{t}(\omega)+C_{t-}(\omega)$ is constant on $[S(\omega), \tau_{S}^{\epsilon}(\omega)]$.
\end{enumerate}
\end{lemma}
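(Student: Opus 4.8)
The two statements are pathwise properties of the solution, and the plan is to read them off from the three Skorokhod/minimality conditions in Definition~\ref{definition1} using only the right upper-semicontinuity of $Y$ and $\xi$. Observe first that $\tau_{S}^{\epsilon}$ is a stopping time in $\mathcal{T}_{S,T}$: it is the hitting time after $S$ of the optional set $\{(\omega,t):Y_{t}(\omega)-\xi_{t}(\omega)\leq\epsilon\}$, which meets $[S,T]$ because taking $\tau=T$ in~\eqref{moneq4} forces $Y_{T}=\xi_{T}$.

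For (1) I would argue by contradiction, showing that the infimum defining $\tau_{S}^{\epsilon}$ is attained. Suppose $Y_{\tau_{S}^{\epsilon}}>\xi_{\tau_{S}^{\epsilon}}+\epsilon$ on an event of positive probability. On that event $\tau_{S}^{\epsilon}$ is approached strictly from the right by points $t_{n}\downarrow\tau_{S}^{\epsilon}$ with $Y_{t_{n}}\leq\xi_{t_{n}}+\epsilon$. Since $Y$ is l\`{a}dl\`{a}g, $Y_{t_{n}}\to Y_{\tau_{S}^{\epsilon}+}$; since $\xi$ is r.u.s.c., $\limsup_{n}\xi_{t_{n}}\leq\xi_{\tau_{S}^{\epsilon}}$. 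Hence $Y_{\tau_{S}^{\epsilon}+}\leq\xi_{\tau_{S}^{\epsilon}}+\epsilon<Y_{\tau_{S}^{\epsilon}}$, so $Y$ has a strictly positive right jump at $\tau_{S}^{\epsilon}$; by the remark following Definition~\ref{definition1}, $\Delta C_{\tau_{S}^{\epsilon}}=Y_{\tau_{S}^{\epsilon}}-Y_{\tau_{S}^{\epsilon}+}>0$, and then the minimality condition~\eqref{moneq7} forces $Y_{\tau_{S}^{\epsilon}}=\xi_{\tau_{S}^{\epsilon}}$, contradicting $Y_{\tau_{S}^{\epsilon}}>\xi_{\tau_{S}^{\epsilon}}+\epsilon\geq\xi_{\tau_{S}^{\epsilon}}$.

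For (2) the plan is to use a strict separation between $Y$ and $\xi$ on $[S,\tau_{S}^{\epsilon})$. Assume $\tau_{S}^{\epsilon}>S$ (otherwise there is nothing to prove). By definition $Y_{t}>\xi_{t}+\epsilon$ for $t\in[S,\tau_{S}^{\epsilon})$, and passing to left limits (legitimate since $\xi$ is left-limited) $Y_{t-}\geq\xi_{t-}+\epsilon>\xi_{t-}$ for $t\in(S,\tau_{S}^{\epsilon}]$. Then: the condition $\int_{0}^{T}1_{\{Y_{t}>\xi_{t}\}}dA^{c}_{t}=0$ shows $A^{c}$ does not increase on $[S,\tau_{S}^{\epsilon})$, hence, being continuous, $A^{c}$ is constant on $[S,\tau_{S}^{\epsilon}]$; since $A$ is predictable its jumps occur at predictable times, and for a predictable $\tau\in(S,\tau_{S}^{\epsilon}]$ the Skorokhod condition~\eqref{moneq6} combined with $Y_{\tau-}>\xi_{\tau-}$ gives $\Delta A^{d}_{\tau}=0$, so $A^{d}$ is constant on $[S,\tau_{S}^{\epsilon}]$; and for $t\in[S,\tau_{S}^{\epsilon})$ one has $Y_{t}>\xi_{t}$, so~\eqref{moneq7} gives $\Delta C_{t}=0$, whence $C$ (being purely discontinuous) is constant on $[S,\tau_{S}^{\epsilon})$, and since $\Delta C_{S}=0$ as well (because $Y_{S}>\xi_{S}$ when $\tau_{S}^{\epsilon}>S$), $C_{t-}=C_{S-}$ for all $t\in[S,\tau_{S}^{\epsilon}]$. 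Adding these three facts gives $A_{t}+C_{t-}=A_{S}+C_{S-}$ on $[S,\tau_{S}^{\epsilon}]$, which is (2).

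The step I expect to be the main obstacle is the analysis exactly at the right endpoint $\tau_{S}^{\epsilon}$, since only right upper-semicontinuity — not right-continuity — is available. In (1) this is why one cannot simply invoke right-continuity at the hitting time and must instead exploit that a downward right jump of $Y$ can occur only where $Y$ meets $\xi$ (minimality of $C$). In (2) the decisive ingredient is the \emph{strict} inequality $Y_{t-}>\xi_{t-}$ on $(S,\tau_{S}^{\epsilon}]$, which is what rules out the predictable process $A^{d}$ charging the point $\tau_{S}^{\epsilon}$; for $\epsilon=0$ this strict separation of left limits degenerates to $Y_{t-}\geq\xi_{t-}$, which is precisely why the companion optimality statement, Proposition~\ref{optimalprop1}(3), must additionally assume that $\xi$ is left upper-semicontinuous.
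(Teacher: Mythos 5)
Your proposal is correct. The paper does not reproduce a proof of this lemma at all — it simply cites \cite[Lemma 4.1]{Ouknine2015} — and your argument (contradiction via the right limits of $Y$, the right upper-semicontinuity of $\xi$ and the minimality condition~\eqref{moneq7} for part (1); the strict separation $Y>\xi$ on $[S,\tau_S^{\epsilon})$ and $Y_{-}>\xi_{-}$ on $(S,\tau_S^{\epsilon}]$ fed into the Skorokhod conditions~\eqref{moneq6} and~\eqref{moneq7} for part (2)) is exactly the standard argument used in that reference, modulo the routine remark that the stopping-time formulations of~\eqref{moneq6}--\eqref{moneq7} are upgraded to pathwise statements by exhausting the jumps of $A^{d}$ and $C$ with sequences of (predictable) stopping times.
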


\begin{proof}[Proof of Proposition \ref{optimalprop1}]
Let $S\in\mathcal{T}_{0,T}$, let $\tau\in\mathcal{T}_{S,T}$ and consider the reflected BSDE $~\eqref{moneq4}$ from Definition$~\ref{definition1}$  between $S$ and $\tau$:
$$
Y_{S} =
Y_{\tau}+\int_{S}^{\tau}f(t,Y_{t},Z_{t}(.))dt-\int_{S}^{\tau}\int_{\mathcal{U}}Z_{t}(x)(\mu-\nu)(dx,dt)
+A_{\tau}-A_{S}+C_{\tau-}-C_{S-}
$$
By taking the conditional expectation on both sides of the above inequality, we obtain
\begin{equation}
Y_{S} =  E\Bigl[Y_{\tau}+\int_{S}^{\tau}f(t,Y_{t},Z_{t}(.))dt+A_{\tau}-A_{S}+C_{\tau-}-C_{S-}|\mathcal{F}_{S} \Bigr]
 \label{optimaleq1}
\end{equation}
since  $t\longmapsto A_{t}(\omega)+C_{t-}(\omega)$ is increasing and $Y_{t}\geq \xi_{t}$, we get
\begin{equation}
Y_{S}\geq E\Bigl[\xi_{\tau}+\int_{S}^{\tau}f(t,Y_{t},Z_{t}(.))dt|\mathcal{F}_{S} \Bigr]
 \label{optimaleq2}
\end{equation}
To prove the reverse inequality, consider $\tau_{S}^{\epsilon}$. It holds by \cite[Lemma 4.1]{Ouknine2015} that $Y_{\tau_{S}^{\epsilon}}\leq \xi_{\tau_{S}^{\epsilon}}+\epsilon$ on $\{\tau_{S}^{\epsilon}<T\}$. Then, by Lemma$~\ref{lemmaprop2}$, for all $S(\omega)\leq t\leq\tau_{S}^{\epsilon}(\omega)$ we have   $$A_{\tau_{S}^{\epsilon}}+C_{\tau_{S}^{\epsilon}-}=A_{t}-C_{t-}$$
Considering all this in $~\eqref{optimaleq1}$ we have

\begin{equation}
 Y_{S}= E\Bigl[Y_{\tau_{S}^{\epsilon}}+\int_{S}^{\tau_{S}^{\epsilon}}f(t,Y_{t},Z_{t}(.))dt|\mathcal{F}_{S} \Bigr]\leq E\Bigl[\xi_{\tau_{S}^{\epsilon}}+\int_{S}^{\tau_{S}^{\epsilon}}f(t,Y_{t},Z_{t}(.))dt|\mathcal{F}_{S} \Bigr]+\epsilon
  \label{optimaleq3}
\end{equation}
As $\epsilon$ is an arbitrary positive number, we get the first and second points. Let us now prove the tired statement. notice $\tau_{S}^{\epsilon}$ that  are non increasing in $\epsilon$ and that $\tau_{S}^{\epsilon}\leq\tau_{S}^{*}$, thus $\tau_{S}^{\epsilon}\longmapsto\tau_{S}^{0}$ when $\epsilon\longrightarrow 0$. Since $\xi_{t}$ is left u.s.c.e., we have from $~\eqref{optimaleq3}$
$$
 E[Y_{S}]\leq lim\sup_{\epsilon\longrightarrow 0} E\Bigl[\xi_{\tau_{S}^{\epsilon}}+\int_{S}^{\tau_{S}^{\epsilon}}f(t,Y_{t},Z_{t}(.))dt \Bigr]\leq E\Bigl[\xi_{\tau_{S}^{0}}+\int_{S}^{\tau_{S}^{0}}f(t,Y_{t},Z_{t}(.))dt\Bigr]
$$
Thus we have
$$
 E[Y_{S}]=E\Bigl[\xi_{\tau_{S}^{0}}+\int_{S}^{\tau_{S}^{0}}f(t,Y_{t},Z_{t}(.))dt\Bigr]
$$
then $\tau_{S}^{0}$ is optimal and $\tau_{S}^{0}\leq\tau_{S}^{*}$. Since $\tau_{S}^{0}$ is optimal it holds that $Y_{\tau_{S}^{0}}=\xi_{\tau_{S}^{0}}$ (see  \cite[Theorem 2.12. page 111]{ElKaroui1981} ), and thus by the definition of $\tau_{S}^{*}$, $\tau_{S}^{*}\leq\tau_{S}^{0}$. This  proves that $\tau_{S}^{*}=\tau_{S}^{0}$ and proves also that $\tau_{S}^{*}$ is the smallest optimal stopping time.
\end{proof}

\section{Appendix} \label{App:AppendixA}

We start this section by the following observation that can be found
in \cite[Remark A.14]{Ouknine2015}.
\begin{proposition}
\label{propA1} Let $Y$ be a cadlag process. Then,
$\sup_{t\in[0,T]}Y_{t}$ is a random variable and we have
$$\sup_{t\in[0,T]}Y_{t}= ess\sup_{t\in[0,T]}Y_{t}=ess\sup
_{\tau\in\mathcal{T}_{0,T}}Y_{t}.$$
\end{proposition}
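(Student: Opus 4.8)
The plan is to reduce the supremum over the uncountable index set $[0,T]$ to a supremum over a countable dense set, which at once settles measurability, and then to read off the two equalities from the defining minimality property of the essential supremum.

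First I would establish, using only right-continuity of $Y$, that
\[
\sup_{t\in[0,T]}Y_{t}=Y_{T}\vee\sup_{q\in\mathbb{Q}\cap[0,T)}Y_{q}.
\]
The inequality ``$\geq$'' is immediate. For ``$\leq$'', fix $t\in[0,T)$ and choose rationals $q_{n}\downarrow t$ with $q_{n}<T$; right-continuity yields $Y_{q_{n}}\to Y_{t}$, and since $Y_{q_{n}}$ is bounded above by the right-hand side for every $n$, so is $Y_{t}$; the endpoint $T$ is covered by the explicit term $Y_{T}$. Being a countable supremum of $\mathcal{F}$-measurable random variables, the right-hand side is a random variable, hence so is $\sup_{t\in[0,T]}Y_{t}$.

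Next I would use that $ess\sup_{t\in[0,T]}Y_{t}$ is, by definition, the a.s.-smallest random variable that dominates $Y_{t}$ a.s.\ for every $t\in[0,T]$. On the one hand, $\sup_{t\in[0,T]}Y_{t}$ is itself a random variable with $\sup_{t\in[0,T]}Y_{t}\geq Y_{s}$ for every $s$ (even pointwise), so by minimality $ess\sup_{t\in[0,T]}Y_{t}\leq\sup_{t\in[0,T]}Y_{t}$ a.s. On the other hand, $ess\sup_{t\in[0,T]}Y_{t}\geq Y_{q}$ a.s.\ for each $q\in\mathbb{Q}\cap[0,T)$ and $ess\sup_{t\in[0,T]}Y_{t}\geq Y_{T}$ a.s.; intersecting these countably many almost-sure inequalities and invoking the displayed identity gives $ess\sup_{t\in[0,T]}Y_{t}\geq\sup_{t\in[0,T]}Y_{t}$ a.s. This proves the first equality.

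Finally, since every deterministic time $t\in[0,T]$ belongs to $\mathcal{T}_{0,T}$, we get $ess\sup_{t\in[0,T]}Y_{t}\leq ess\sup_{\tau\in\mathcal{T}_{0,T}}Y_{\tau}$ a.s.; conversely, for any $\tau\in\mathcal{T}_{0,T}$ one has $0\leq\tau\leq T$ a.s., hence $Y_{\tau}\leq\sup_{t\in[0,T]}Y_{t}$ a.s., and taking the essential supremum over $\tau$ together with the first equality yields $ess\sup_{\tau\in\mathcal{T}_{0,T}}Y_{\tau}\leq\sup_{t\in[0,T]}Y_{t}=ess\sup_{t\in[0,T]}Y_{t}$ a.s. The only genuinely delicate point in the whole argument is the countable reduction of the supremum in the first step; once that is in hand, everything reduces to the elementary fact that countably many almost-sure statements can be combined on a single almost-sure event.
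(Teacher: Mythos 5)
Your proof is correct and complete: the reduction of the supremum to the countable index set $\mathbb{Q}\cap[0,T)$ plus the endpoint $T$ via right-continuity settles measurability, and the two equalities then follow exactly as you say from the minimality characterization of the essential supremum and from the fact that deterministic times are stopping times while every $\tau\in\mathcal{T}_{0,T}$ takes values in $[0,T]$ a.s. The paper itself gives no proof of this proposition, only a citation to \cite[Remark A.14]{Ouknine2015}, and your argument is precisely the standard one underlying that remark.
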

The following definition can be found in \cite[Appendix 1]{DellacherieMeyer1980}.
\begin{definition}
\label{defnA1}
 Let $(Y)_{t\in[0,T]}$  be an optional process. We say that $Y$ is a strong (optional) supermartingale
if \begin{itemize}
    \item $Y_{\tau}$ is integrable for all $\tau\in\mathcal{T}_{0,T}$ and
    \item $Y_{S}\geq E[Y_{\tau}|\mathcal{F}_{S}]$ a.s., for all
    $S,\tau\in\mathcal{T}_{0,T}$ such that $S\leq \tau $ a.s.
\end{itemize}
\end{definition}

We recall the change of variables formula for
optional semimartingales which are not necessarily right continuous. The result
can be seen as a generalization of the classical It\^{o} formula and can
be found in \cite[Theorem 8.2]{Galchouk1981}
(see also \cite[Section 3, page 538]{Lenglart1980}). We recall the result
in our framework in which the underlying filtered probability space
satisfies the usual conditions.
\begin{theorem}
\label{galchouklenglart} \textbf{(Gal'chouk-Lenglart)}  Let
$n\in\N$. Let $X$ be an n-dimensional optional semimartingale, i.e.
$X = (X_1,... ,X_n)$ is an n-dimensional optional process with
decomposition $X_{t}^{k}=X_{0}^{k}+M_{t}^{k}+A_{t}^{k}+B_{t}^{k}$,
for all $k\in\{1,...,n\}$ where $M_{t}^{k}$ is a (c\`{a}dl\`{a}g) local
martingale, $A_{t}^{k}$ is a right-continuous process of finite
variation such that $A_{0}=0$ and $B_{t}^{k}$ is a left-continuous
process of finite variation which is purely discontinuous and such
that $B_{0-}=0$. Let $F$ be a twice continuously differentiable
function on $\R^{n}$. Then, almost surely, for all $t\geq0$,
\begin{eqnarray*}
  F(X_{t}) &=& F(X_{0})+\sum_{k=1}^{n}\int_{]0,t]}D^{k}F(X_{s-})d(M^{k}+A^{k})_{s}
           + \frac{1}{2}\sum_{k,l=1}^{n}\int_{]0,t]}D^{k}D^{l}F(X_{s-})d<M^{kc},M^{lc}>_{s}\\
           &+& \sum_{0<s\leq t}\Bigl[F(X_{s})-F(X_{s-})-\sum_{k=1}^{n}D^{k}F(X_{s-})\Delta X^{k}_{s}\Bigr]
           + \sum_{k=1}^{n}\int_{[0,t[}D^{k}F(X_{s})d(B^{k})_{s+} \\
           &+& \sum_{0\leq s<t}\Bigl[F(X_{s+})-F(X_{s})-\sum_{k=1}^{n}D^{k}F(X_{s})\Delta_{+}X^{k}_{s}\Bigr]
\end{eqnarray*}
where $D^k$ denotes the differentiation operator with respect to the
$k$-th coordinate, and $M^{kc}$ denotes the continuous part of
$M^k$.
\end{theorem}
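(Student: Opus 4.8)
The plan is to reduce the identity to the classical change-of-variables formula for càdlàg semimartingales, and then to insert the contribution of the left-continuous part $B$ by a pathwise summation over the right-jump times. First I would localize: by a standard stopping argument I may assume each $M^{k}$ is a bounded martingale, $A^{k}$ and $B^{k}$ have bounded total variation, and $X$ takes values in a fixed compact set $\mathcal{K}\subset\R^{n}$, so that $F$ and its first two derivatives are bounded on the range of $X$; the asserted identity is local in time and stable under such localization. Next I record the jump bookkeeping. Since $B^{k}$ is left-continuous, $B^{k}_{s-}=B^{k}_{s}$, so the left jumps of $X$ come only from the càdlàg part, $\Delta X^{k}_{s}=\Delta(M^{k}+A^{k})_{s}$; and since $M^{k}+A^{k}$ is càdlàg, the right jumps of $X$ come only from $B$, $\Delta_{+}X^{k}_{s}=\Delta_{+}B^{k}_{s}$. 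Writing $Y^{k}_{t}:=X^{k}_{0}+M^{k}_{t}+A^{k}_{t}$, the process $Y$ is a càdlàg semimartingale and $X_{t}=Y_{t}+B_{t}$. When $B\equiv0$ one has $\Delta_{+}X\equiv0$, the last two terms of the formula drop out, and the statement is exactly the classical Itô formula for the càdlàg semimartingale $Y$, which I take as known.

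I would then treat the right-jump part, assuming first that $B$ has only finitely many right-jump times $0\le t_{1}<\dots<t_{m}<t$, so that $B$ is constant on each $(t_{i},t_{i+1}]$. Telescoping gives
$$F(X_{t})-F(X_{0})=\sum_{i}\left(F(X_{t_{i+1}})-F(X_{t_{i}+})\right)+\sum_{i}\left(F(X_{t_{i}+})-F(X_{t_{i}})\right).$$
On each interval $(t_{i},t_{i+1}]$ the part $B$ is a constant, so $dX=dY$ there and the left jumps and continuous brackets of $X$ agree with those of $Y$; applying the càdlàg formula on this interval yields $\int_{]t_{i},t_{i+1}]}D^{k}F(X_{s-})\,d(M^{k}+A^{k})_{s}$, the bracket term $\tfrac12\int_{]t_{i},t_{i+1}]}D^{k}D^{l}F(X_{s-})\,d\langle M^{kc},M^{lc}\rangle_{s}$, and the left-jump sum over $(t_{i},t_{i+1}]$. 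Summing over $i$ reassembles exactly the first line and the first jump sum of the claim (the integrands $D^{k}F(X_{s-})$ match the global ones because $X_{s-}=Y_{s-}+B_{s}$). For each right-jump time I write the exact expansion
$$F(X_{t_{i}+})-F(X_{t_{i}})=\sum_{k}D^{k}F(X_{t_{i}})\,\Delta_{+}X^{k}_{t_{i}}+\left[F(X_{t_{i}+})-F(X_{t_{i}})-\sum_{k}D^{k}F(X_{t_{i}})\,\Delta_{+}X^{k}_{t_{i}}\right];$$
the first term summed over $i$ is the Stieltjes integral $\sum_{k}\int_{[0,t[}D^{k}F(X_{s})\,dB^{k}_{s+}$ (using $\Delta_{+}X=\Delta_{+}B$), and the bracketed remainder summed over $i$ is the last jump sum. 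This proves the identity when $B$ has finitely many jumps; a time that is simultaneously a left- and right-jump time causes no trouble, since it is processed in two steps, $X_{s-}\to X_{s}$ in the left part and $X_{s}\to X_{s+}$ in the right part.

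For general $B$ I would approximate by the finite-jump processes $B^{(n)}$ retaining only right jumps of magnitude $\ge 1/n$, and set $X^{(n)}=Y+B^{(n)}$. Because $B$ has finite variation, $\sum_{0\le s<t}|\Delta_{+}B_{s}|<\infty$, whence $B^{(n)}\to B$ uniformly on $[0,t]$ and thus $X^{(n)}\to X$, $X^{(n)}_{s}\to X_{s}$, $X^{(n)}_{s+}\to X_{s+}$ uniformly. By continuity of $F,DF,D^{2}F$ on the compact $\mathcal{K}$, every term of the finite-jump identity converges to the corresponding term for $X$; the first-order Stieltjes integral converges by dominated convergence against the total-variation measure of $B$, while for the second-order right-jump sum I would use the Taylor estimate
$$\left|F(X_{s+})-F(X_{s})-\sum_{k}D^{k}F(X_{s})\,\Delta_{+}X^{k}_{s}\right|\le\tfrac{1}{2}\,\|D^{2}F\|_{\infty,\mathcal{K}}\,|\Delta_{+}B_{s}|^{2},$$
which is summable and dominated, so that sum converges absolutely to the asserted right-jump sum.

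The hard part will be making this passage to the limit rigorous simultaneously for all $t$ and in a manner compatible with the stochastic integral $\int D^{k}F(X_{s-})\,d(M^{k}+A^{k})_{s}$, which is defined only up to indistinguishability: one must check that the integrands $D^{k}F(X^{(n)}_{s-})$ converge to $D^{k}F(X_{s-})$ in the appropriate space so that the stochastic integrals converge (in the semimartingale topology), and then combine this with the a.s. pathwise convergence of all the finite-variation and jump terms to obtain an identity holding, for every $t$, off a single null set. Localization keeps everything bounded, so these convergences reduce to dominated-convergence arguments, and the bookkeeping of coincident left/right jump times—already handled in the finite case—is the remaining routine but delicate point.
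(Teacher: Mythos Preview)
The paper does not prove this theorem. It is stated in the Appendix as a known result and attributed to Gal'chouk \cite[Theorem 8.2]{Galchouk1981} and Lenglart \cite[Section 3, page 538]{Lenglart1980}; no argument is given beyond the citation. So there is no ``paper's own proof'' to compare against.

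That said, your sketch is essentially the standard route to the Gal'chouk--Lenglart formula and is close in spirit to Lenglart's original argument: localize, freeze the left-continuous part $B$ between its (finitely many) right-jump times so that on each subinterval $X$ differs from the c\`adl\`ag semimartingale $Y=X_0+M+A$ by a constant and the classical It\^o formula applies, add back the right jumps via the exact first-order Taylor expansion, and then pass from finitely many jumps to general $B$ using the finite-variation bound and the second-order Taylor estimate you wrote. The jump bookkeeping $\Delta X=\Delta(M+A)$, $\Delta_{+}X=\Delta_{+}B$ is correct, and the integrands $D^{k}F(X_{s-})$ do match globally because on $(t_i,t_{i+1}]$ one has $X_{s-}=Y_{s-}+B_{t_i+}$. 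The one genuinely delicate point, which you correctly flag, is upgrading the convergence of the stochastic integrals $\int D^{k}F(X^{(n)}_{s-})\,dM^{k}$ to an identity valid for all $t$ off a single null set; after localization this is handled by bounded convergence for stochastic integrals plus the observation that both sides of the claimed identity are l\`adl\`ag processes, so equality at all rational times forces indistinguishability.
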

\begin{corollary}
\label{corollaire1} Let $Y$ be a one-dimensional optional
semimartingale with decomposition $Y_t=Y_0+M_t+A_t+B_t$, where $M$,
$A$ and $B$ are as in the above theorem. Let $\beta
>0$. Then, almost surely, for all $t$ in $[0,T]$,
\begin{eqnarray*}
  e^{\beta t}Y_{t}^{2} &=&  Y_{0}^{2}+\int_{0}^{t}\beta e^{\beta s}Y_{s}^{2}ds+2\int_{0}^{t}e^{\beta s}Y_{s-}d(A+M)_{s}
                       + \int_{0}^{t} e^{\beta s}d<M^{c},M^{c}>_{s}\\
                       &+& \sum_{0<s\leq t}e^{\beta s}(Y_{s}-Y_{s-})^{2}+2\int_{0}^{t}e^{\beta
s}Y_{s}d(B)_{s+}+\sum_{0\leq s<t}e^{\beta
  s}(Y_{s+}-Y_{s})^{2}
\end{eqnarray*}
\end{corollary}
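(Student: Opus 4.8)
The plan is to apply the Gal'chouk--Lenglart change of variables formula (Theorem~\ref{galchouklenglart}) in dimension $n=2$ to the function $F(x,y)=xy^{2}$, which is $C^{\infty}$ on $\R^{2}$, evaluated at the two-dimensional optional semimartingale $X=(X^{1},X^{2})$ with $X^{1}_{t}=e^{\beta t}$ and $X^{2}_{t}=Y_{t}$. First I would record the canonical decompositions: since $t\mapsto e^{\beta t}$ is deterministic, continuous and of finite variation on $[0,T]$ with $e^{0}=1$, we may take $X^{1}_{t}=1+A^{1}_{t}$ with $A^{1}_{t}=e^{\beta t}-1$ (right-continuous, finite variation, $A^{1}_{0}=0$), and $M^{1}\equiv 0$, $B^{1}\equiv 0$; in particular $X^{1}$ has no continuous martingale part and $X^{1}_{s-}=X^{1}_{s+}=X^{1}_{s}=e^{\beta s}$, $\Delta X^{1}_{s}=\Delta_{+}X^{1}_{s}=0$ for all $s$. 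For $X^{2}$ we use the hypothesis $Y_{t}=Y_{0}+M_{t}+A_{t}+B_{t}$, so that $M^{2c}=M^{c}$.

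Next I would compute $D^{1}F(x,y)=y^{2}$, $D^{2}F(x,y)=2xy$, $D^{1}D^{1}F=0$, $D^{1}D^{2}F=D^{2}D^{1}F=2y$, $D^{2}D^{2}F=2x$, and substitute into Theorem~\ref{galchouklenglart}. The first sum splits into its $k=1$ part $\int_{]0,t]}Y_{s-}^{2}\,d(e^{\beta s})=\int_{0}^{t}\beta e^{\beta s}Y_{s-}^{2}\,ds=\int_{0}^{t}\beta e^{\beta s}Y_{s}^{2}\,ds$ (the last equality because $Y$ and $Y_{-}$ differ on a countable, hence Lebesgue-null, set of times), and its $k=2$ part $2\int_{0}^{t}e^{\beta s}Y_{s-}\,d(M+A)_{s}$. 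In the second-order term only $(k,l)=(2,2)$ survives because $M^{1}\equiv 0$, giving $\tfrac12\int_{]0,t]}2e^{\beta s}\,d\langle M^{c},M^{c}\rangle_{s}=\int_{0}^{t}e^{\beta s}\,d\langle M^{c},M^{c}\rangle_{s}$; likewise $B^{1}\equiv 0$ leaves only $2\int_{0}^{t}e^{\beta s}Y_{s}\,d(B)_{s+}$ from the $d(B)_{s+}$ integral.

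For the two jump sums I would use $\Delta X^{1}=\Delta_{+}X^{1}=0$ together with the continuity of $e^{\beta\cdot}$ to get, for each fixed $s$,
$$F(X_{s})-F(X_{s-})-D^{2}F(X_{s-})\Delta X^{2}_{s}=e^{\beta s}\bigl(Y_{s}^{2}-Y_{s-}^{2}-2Y_{s-}(Y_{s}-Y_{s-})\bigr)=e^{\beta s}(Y_{s}-Y_{s-})^{2},$$
and, in the same way, $F(X_{s+})-F(X_{s})-D^{2}F(X_{s})\Delta_{+}X^{2}_{s}=e^{\beta s}(Y_{s+}-Y_{s})^{2}$. Summing these over $0<s\le t$ and over $0\le s<t$ respectively, and adding all contributions together with $F(X_{0})=Y_{0}^{2}$, yields exactly the claimed identity.

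I do not expect a real obstacle here: the computation is routine once the correct two-dimensional setup is chosen. The only points requiring attention are bookkeeping ones --- keeping track of where each integrand is evaluated ($X_{s-}$ in the $d(M+A)$ integral and the left-jump sum, $X_{s}$ in the $d(B)_{s+}$ integral and the right-jump sum), using that $X^{1}$ contributes nothing to the continuous martingale bracket or to the purely discontinuous left-limited part, and the harmless replacement of $Y_{s-}$ by $Y_{s}$ inside the $ds$-integral. Everything else reduces to elementary algebra on $F(x,y)=xy^{2}$ and its derivatives.
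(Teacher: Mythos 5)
Your proposal is correct and follows exactly the paper's own route: the paper's proof of this corollary is precisely the one-line instruction to apply Theorem~\ref{galchouklenglart} with $n=2$, $F(x,y)=xy^{2}$, $X^{1}_{t}=e^{\beta t}$ and $X^{2}_{t}=Y_{t}$, and you have simply carried out that computation in full, with all the bookkeeping (including the harmless replacement of $Y_{s-}$ by $Y_{s}$ in the $ds$-integral) done correctly.
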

\begin{proof}
For the corollary demonstration, it suffices to apply the change of
variables formula from Theorem$~\ref{galchouklenglart}$ with $n=2$,
$F(x,y)=xy^{2}$, $X_{t}^{1}= e^{\beta t}$ and  $X_{t}^{2}=Y_{t}$.
\end{proof}
\begin{corollary}
\label{corollaire2}
 Let $Y$ be a one-dimensional optional
semimartingale with decomposition $Y_t=Y_0+M_t+A_t+B_t$, where $M$,
$A$ and $B$ are as in the above theorem. Then, almost surely, for all $t$ in $[0,T]$,
\begin{eqnarray*}
  |Y_{t}|^{2}&+&\int_{t}^{T}1_{\{Y_{s}\geq0\}}d<M^{c},M^{c}>_{s}+J^{+}_{T}(2)-J^{+}_{t}(2)+J^{-}_{T}(2)-J^{-}_{t}(2)\\
  &\leq &  |Y_{T}|^{2}+2\int_{t}^{T}|Y_{s-}|1_{\{Y_{s-}\geq0\}}d(A+M)_{s}+2\sum_{t\leq s\leq T}|Y_{s}|1_{\{Y_{s}\geq0\}}(Y_{s+}-Y_{s})
\end{eqnarray*}
where $J^{+}_{t}(2)= \sum_{s<t}\Bigl(|Y_{s+}|^{2}-|Y_{s}|^{2}-2|Y_{s}|1_{\{Y_{s}\geq0\}}(Y_{s+}-Y_{s})\Bigr)$ and
$J^{-}_{t}(2)= \sum_{s\leq t}\Bigl(|Y_{s}|^{2}-|Y_{s-}|^{2}-2|Y_{s-}|1_{\{Y_{s-}\geq0\}}(Y_{s}-Y_{s-})\Bigr)$ for all $t\in[0,T]$.
\end{corollary}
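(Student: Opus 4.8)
The plan is to deduce Corollary~\ref{corollaire2} from the Gal'chouk--Lenglart change of variables formula (Theorem~\ref{galchouklenglart}) applied to the one-dimensional optional semimartingale $Y=Y_0+M+A+B$ and to the convex function $\varphi(y)=(y^{+})^{2}$, so that $|Y_t|^{2}$ and $|Y_s|1_{\{Y_s\ge 0\}}$ in the statement are to be read as $(Y_t^{+})^{2}$ and $Y_s^{+}$; one then rearranges the resulting identity between the times $t$ and $T$ --- exactly as in Corollary~\ref{corollaire1} and in the proof of Lemma~\ref{lemme2} --- so as to collect the nonnegative quadratic-variation and jump-correction contributions on the left and isolate the drift, martingale and $B$-terms on the right. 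The only obstruction is that $\varphi$ is merely $C^{1}$, with Lipschitz derivative $\varphi'(y)=2y^{+}$, and is not twice continuously differentiable, so Theorem~\ref{galchouklenglart} does not apply to $\varphi$ verbatim; handling this is the main point.

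I would deal with it by regularisation. Choose $C^{2}$ convex functions $\varphi_{n}$ with $\varphi_{n}\to\varphi$ and $\varphi_{n}'\to\varphi'$ locally uniformly, $0\le\varphi_{n}''\le 2$, and $\varphi_{n}''(y)\to 2\cdot 1_{\{y>0\}}$ for $y\neq 0$ (a mollification of $\varphi$ has these properties). Theorem~\ref{galchouklenglart}, applied with $n=1$, $F=\varphi_{n}$, $X=Y$, gives for each $t\in[0,T]$ an exact almost sure identity expressing $\varphi_{n}(Y_T)-\varphi_{n}(Y_t)$ as the sum of $\int_{]t,T]}\varphi_{n}'(Y_{s-})\,d(M+A)_s$, $\frac{1}{2}\int_{]t,T]}\varphi_{n}''(Y_{s-})\,d<M^{c},M^{c}>_s$, the left-jump series $\sum_{t<s\le T}[\varphi_{n}(Y_s)-\varphi_{n}(Y_{s-})-\varphi_{n}'(Y_{s-})\Delta Y_s]$, the term $\int_{[t,T[}\varphi_{n}'(Y_s)\,dB_{s+}$, and the right-jump series $\sum_{t\le s<T}[\varphi_{n}(Y_{s+})-\varphi_{n}(Y_s)-\varphi_{n}'(Y_s)\Delta_{+}Y_s]$. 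Since $\varphi_{n}$ is convex, every jump summand is nonnegative and $\varphi_{n}''\ge 0$; so, after the rearrangement of Corollary~\ref{corollaire1}, this identity has $\varphi_{n}(Y_t)$ together with the $d<M^{c},M^{c}>$-integral and the two jump series on one side, and $\varphi_{n}(Y_T)$ together with the drift/martingale integral and the $B$-term on the other.

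Finally I would pass to the limit $n\to\infty$. Being l\`adl\`ag, $Y$ is bounded on $[0,T]$ along each path, so $|\varphi_{n}'(Y_{s})|\le c(1+\sup_{[0,T]}|Y|)$ pathwise with $c$ independent of $n$; together with $\varphi_{n}\to\varphi$, $\varphi_{n}'\to\varphi'$ and $0\le\varphi_{n}''\le 2$ this permits passing to the limit in every term: $\varphi_{n}(Y_t)\to(Y_t^{+})^{2}$, $\varphi_{n}(Y_T)\to(Y_T^{+})^{2}$, and, by dominated convergence --- against $dA$ and $dB_{s+}$ pathwise, against $dM_s$ along a localising sequence of the local martingale $M$, and against $d<M^{c},M^{c}>_s$ using $\varphi_{n}''\le 2$ --- the remaining terms tend to $2\int_{]t,T]}Y_{s-}^{+}\,d(M+A)_s$, $2\sum_{t\le s<T}Y_s^{+}\Delta_{+}Y_s$ and $\int_{]t,T]}1_{\{Y_{s-}>0\}}\,d<M^{c},M^{c}>_s$. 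For the two jump series, each summand is nonnegative and, by the Lipschitz bound on $\varphi_{n}'$, dominated for every $n$ by $(\Delta Y_s)^{2}$ (respectively $(\Delta_{+}Y_s)^{2}$), whose sums over $]t,T]$ (respectively over $[t,T[$) are finite along each path; hence dominated convergence yields the limits $J^{-}_{T}(2)-J^{-}_{t}(2)$ and $J^{+}_{T}(2)-J^{+}_{t}(2)$. It then remains to match terms: $\int_{]t,T]}1_{\{Y_{s-}>0\}}\,d<M^{c},M^{c}>_s=\int_{t}^{T}1_{\{Y_{s}\ge 0\}}\,d<M^{c},M^{c}>_s$, since the continuous increasing process $<M^{c},M^{c}>$ charges neither the countable set of jump times of $Y$ nor the level set $\{Y_{s-}=0\}$ (occupation times formula), and $\Delta_{+}Y_T=0$ lets the last sum be written over $t\le s\le T$; reading the limiting identity as an inequality gives the stated inequality. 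An alternative that bypasses the regularisation is to apply directly the Tanaka formula for optional semimartingales of \cite{Lenglart1980} (see also \cite[Theorem~4.1]{baadi2018}) to $(y^{+})^{2}$, which gives the same identity with any local-time term written out explicitly and then discarded. The delicate step is the interchange of the limit with the two infinite jump series and with the $d<M^{c},M^{c}>$-integral; the remaining algebra is the term collecting already carried out in the proof of Lemma~\ref{lemme2}.
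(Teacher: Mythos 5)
Your overall strategy is sound and in fact more informative than what the paper provides: the paper's entire proof of this corollary is the citation ``Follows from Theorem~\ref{galchouklenglart} and \cite[Corollary 5.5]{Klimsiak2018}'', so your mollification argument --- read $|Y|^{2}$ as $(Y^{+})^{2}$, approximate $\varphi(y)=(y^{+})^{2}$ by $C^{2}$ convex $\varphi_{n}$ with $0\le\varphi_{n}''\le2$, apply Theorem~\ref{galchouklenglart} to each $\varphi_{n}$, and pass to the limit using the bound $0\le\varphi_{n}(b)-\varphi_{n}(a)-\varphi_{n}'(a)(b-a)\le(b-a)^{2}$ to dominate the two jump series and the occupation-times formula to discard the level set $\{Y_{s-}=0\}$ in the $d\langle M^{c},M^{c}\rangle$-integral --- is a legitimate self-contained substitute for that citation; your suggested shortcut via Lenglart's Tanaka formula is essentially the route the paper takes. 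The limit interchanges you single out are indeed the delicate points and you justify them adequately (for the stochastic integral one should add ``convergence uniformly on compacts in probability, hence a.s.\ uniformly along a subsequence'' so as to obtain the conclusion simultaneously for all $t$, and the occupation-times argument should be run on the c\`adl\`ag modification $Y_{+}$, which differs from $Y$ only on a countable, hence $d\langle M^{c},M^{c}\rangle$-null, set; both points are routine).

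The one step that does not go through is the last one: ``reading the limiting identity as an inequality gives the stated inequality.'' The identity your argument produces is
$$(Y_{t}^{+})^{2}+\int_{t}^{T}1_{\{Y_{s}\ge0\}}\,d\langle M^{c},M^{c}\rangle_{s}+J^{+}_{T}(2)-J^{+}_{t}(2)+J^{-}_{T}(2)-J^{-}_{t}(2)
=(Y_{T}^{+})^{2}-2\int_{]t,T]}Y_{s-}^{+}\,d(A+M)_{s}-2\sum_{t\le s<T}Y_{s}^{+}\Delta_{+}Y_{s},$$
with \emph{minus} signs on the last two terms, whereas the corollary's right-hand side carries \emph{plus} signs. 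For a general optional semimartingale these two terms have no definite sign, so the identity does not imply the printed inequality; indeed the printed inequality is false as literally stated (take $M=B=0$, $A_{t}=-t$, $Y_{0}=1$, $T=1/2$: the left-hand side equals $1$ while the right-hand side equals $-1/2$). This is a defect of the statement rather than of your method --- the corollary is evidently meant to record the exact identity above, and in the proof of Theorem~\ref{comparison1} it is invoked only after substituting the specific decomposition of $Y^{1}-Y^{2}$, whose finite-variation parts carry built-in minus signs so that the intended terms emerge --- but your write-up should either state the identity with the correct signs or flag the sign error explicitly, rather than assert that the identity ``gives the stated inequality.''
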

\begin{proof}
 Follows from Theorem$~\ref{galchouklenglart}$ and  \cite[Corollary 5.5]{Klimsiak2018}
\end{proof}

Now, let us prove the Theorem \ref{lemme3}.
\begin{proof}[Proof of Theorem \ref{lemme3}]
\label{proofoflemme3}
For all  $S\in\mathcal{T}_{0,T}$, we define the family $\overline{Y}(S)$ by:
\begin{equation}
\overline{Y}(S)=ess\sup_{\tau\in\mathcal{T}_{S,T}}E\Bigl[\xi_{\tau}+\int_{S}^{\tau}
f(t)dt|\mathcal{F}_{S} \Bigr] \,\,\,\  , \,\,\,\
\overline{Y}(T)=\xi_{T} \label{mono21}
\end{equation}
We put
\begin{equation}
\overline{\overline{Y}}(S)=\overline{Y}(S)+\int_{0}^{S}f(t)dt=ess\sup_{\tau\in\mathcal{T}_{S,T}}E\Bigl[\xi_{\tau}+\int_{0}^{\tau}
f(t)dt|\mathcal{F}_{S} \Bigr]
 \label{moneq29}
\end{equation}
By the Proposition  $~\ref{proposition1}$, there exists a  l\`{a}dl\`{a}g optional process $(Y_{t})_{t\in[0,T]}$ which
aggregates the family $(\overline{Y}(S))_{S\in\mathcal{T}_{0,T}}$, that is,
\begin{equation}
\overline{Y}_{S}=\overline{Y}(S)\,\,\,\ a.s. \,\,\,\ for\,\,\,\ all \,\,\ S\in\mathcal{T}_{0,T}
 \label{mono22}
\end{equation}

\textbf{Step 1: let us show that $\overline{Y}\in\mathds{S}^{2,0}$.}  By using the definition of $\overline{Y}$
$~\eqref{mono20}$, Jensen's inequality and the triangular
inequality, we get
$$|\overline{Y}_{S}|\leq ess\sup_{\tau\in\mathcal{T}_{S,T}}E\Bigl[|\xi_{\tau}|+|\int_{S}^{\tau}
f(t)dt| |\mathcal{F}_{S} \Bigr]\leq
E\Bigl[ess\sup_{\tau\in\mathcal{T}_{S,T}}|\xi_{\tau}|+\int_{0}^{T}
|f(t)|dt |\mathcal{F}_{S} \Bigr]$$ Thus, we obtain
\begin{equation}
|\overline{Y}_{S}|\leq E\Bigl[X|\mathcal{F}_{S}\Bigr]
 \label{moneq30}
\end{equation}
with
\begin{equation}
X=\int_{0}^{T}|f(t)|dt+ess\sup_{\tau\in\mathcal{T}_{0,T}}|\xi_{\tau}|.
 \label{moneq31}
\end{equation}
Applying Cauchy-Schwarz inequality, we get
\begin{equation}
E[X^2]\leq cTE\Bigl[\int_{0}^{T}f(s)^{2}ds\Bigr]+c\||\xi|\|_{\mathds{S}^{2,0}}^{2}<\infty.
\label{moneq32}
\end{equation}
where $c$ is a positive constant. Now, the inequality $~\eqref{moneq30}$
leads to $|\overline{Y}_{S}|^{2}\leq |E[X|\mathcal{F}_{S}]|^{2}$. By
taking the essential supremum over $S\in\mathcal{T}_{0,T}$ we get
$ess\sup_{S\in\mathcal{T}_{0,T}}|\overline{Y}_{S}|^{2}\leq
ess\sup_{S\in\mathcal{T}_{0,T}}|E[X|\mathcal{F}_{S}]|^{2}$. By using the
Proposition A.3 in \cite{Ouknine2015}, we get
$ess\sup_{S\in\mathcal{T}_{0,T}}|\overline{Y}_{S}|^{2}\leq
\sup_{t\in[0,T]}|E[X|\mathcal{F}_{t}]|^{2}$. By using this
inequality and Doob's martingale inequalities, we obtain
\begin{equation}
E\Bigl[ess\sup_{S\in\mathcal{T}_{0,T}}|\overline{Y}_{S}|^{2}\Bigr]\leq
E\Bigl[\sup_{t\in[0,T]}|E[X|\mathcal{F}_{t}]|^{2}\Bigr] \leq
cE[X^{2}] \label{moneq33}
\end{equation}
where $c$ is a positive constant  that changes from line to line.
Finally, combining the inequalities $~\eqref{moneq32}$ and
$~\eqref{moneq33}$, we get
\begin{equation}
E\Bigl[ess\sup_{S\in\mathcal{T}_{0,T}}|\overline{Y}_{S}|^{2}\Bigr]\leq
cTE\Bigl[\int_{0}^{T}f(s)^{2}ds\Bigr]+c\||\xi|\|_{\mathds{S}^{2,0}}^{2}<\infty.
\label{moneq34}
\end{equation}
Then $\overline{Y}_{S}\in\mathds{S}^{2,0}$. \\

\textbf{Step 2: the existence of $Z$, $A$ and $C$.} By the previous step and since $E\Bigl[\int_{0}^{T}f^{2}(t)dt\Bigr]<\infty$, the strong optional supermartingale
$\overline{\overline{Y}}$ is of class $(D)$. Applying Mertens
decomposition (see \cite[Theorem A.1]{Ouknine2015}) and a result from
optimal stopping theory (see more in \cite[Proposition 2.34. page 131]{ElKaroui1981} or \cite{KobylanskiQuenez2012}), we have the following
$$\overline{\overline{Y}}_{\tau}=M_{\tau}-A_{\tau}-C_{\tau-} \,\,\,\ \forall \tau\in\mathcal{T}_{0,T}$$
and by $~\eqref{moneq29}$ we obtain
\begin{equation}
\overline{Y}_{\tau}=-\int_{0}^{\tau}f(t)dt+M_{\tau}-A_{\tau}-C_{\tau-}
\,\,\,\ a.s. \,\,\,\ \forall \tau\in\mathcal{T}_{0,T}
 \label{moneq35}
\end{equation}
where $M$ is a (c\`{a}dl\`{a}g) uniformly integrable martingale such that
$M_0=0$, $A$ is a nondecreasing right-continuous predictable process
such that $A_0=0$, $E(A_T)<\infty$ and satisfying $~\eqref{moneq6}$,
and $C$ is a nondecreasing right-continuous adapted purely
discontinuous process such that $C_{0-}=0$, $E(C_T)<\infty$ and
satisfying $~\eqref{moneq7}$. By the martingale representation
theorem (Lemma$~\ref{lemme1}$), there exists a unique predictable
process $Z$ such that
$$M_{t}=\int_{0}^{t}\int_{\mathcal{U}}Z_{s}(x)(\mu-\nu)(ds,dx).$$
Moreover, we have $\overline{Y}_{T}=\xi_{T}$ a.s. by definition of
$\overline{Y}$. Combining this with equation $~\eqref{moneq35}$ we get the equation $~\eqref{moneq4}$. Also by definition of
$\overline{Y}$, we have $\overline{Y}_{S}\geq\xi_{S}$ a.s. for all
$S\in\mathcal{T}_{0,T}$, which, along with Proposition $~\ref{AshkanNikeghbali}$
 (or  with \cite[Theorem 3.2.]{AshkanNikeghbali2006}), shows that $\overline{Y}_{t}\geq \xi_{t}$, $0\leq t \leq T$ a.s. Finally, to conclude that the process
$(\overline{Y},Z,A,C)$ is a solution to the reflected BSDE with
parameters $(f,\xi)$, it remains to show that $Z\times A\times C\in\mathds{H}^{2,0}\times\mathds{S}^{2,0}\times\mathds{S}^{2,0}$.\\

\textbf{Step 3: let us prove that $A\times C\in\mathds{S}^{2,0}\times\mathds{S}^{2,0}$.}
Let us define the process $\overline{\overline{A}}_{t}=A_{t}+C_{t-}$
where the processes $A$ and  $C$ are given by $~\eqref{moneq35}$. By similar
arguments as those used in the proof of inequality
$~\eqref{moneq30}$, we see that $|\overline{\overline{Y}}_{S}|\leq
E[X|\mathcal{F}_{S}]$ with
$$X=\int_{0}^{T}|f(t)|dt+ess\sup_{\tau\in\mathcal{T}_{S,T}}|\xi_{\tau}|.$$
Then, the \cite[Corollary A.1]{Ouknine2015} ensures the
existence of a constant $c>0$ such that
$E[(\overline{\overline{A}}_{T})^{2}]\leq cE[X^{2}]$. By combining
this inequality with inequality $~\eqref{moneq32}$, we obtain
\begin{equation}
E[(\overline{\overline{A}}_{T})^{2}]\leq
cTE\Bigl[\int_{0}^{T}f(s)^{2}ds\Bigr]+c\||\xi|\|_{\mathds{S}^{2,0}}^{2}
 \label{moneq36}
\end{equation}
where we have again allowed the positive constant $c$ to vary from
line to line. We conclude that  $\overline{\overline{A}}\in L^{2}$.
According to the nondecreasingness of $\overline{\overline{A}}$, we have
$(\overline{\overline{A}}_{\tau})^{2}\leq
(\overline{\overline{A}}_{T})^{2}$ for all
$\tau\in\mathcal{T}_{0,T}$ thus
$$E\Bigl[ess\sup_{\tau\in\mathcal{T}_{0,T}}(\overline{\overline{A}}_{\tau})^{2}\Bigr]\leq
E\Bigl[(\overline{\overline{A}}_{T})^{2}\Bigr]$$ i.e.
$\overline{\overline{A}}\in \mathds{S}^{2}$ then $A\in
\mathds{S}^{2,0}$ and $C\in \mathds{S}^{2,0}$. \\

\textbf{Step 4: we show now that $Z\in\mathds{H}^{2,0}$.}
We have from \textbf{step 3}
$$\int_{0}^{T}\int_{\mathcal{U}}Z_{s}(x)(\mu-\nu)(ds,dx)=
\overline{Y}_{T}+\int_{0}^{T}f(t)dt+\overline{\overline{A}}_{T}-\overline{Y}_{0}$$
where $\overline{\overline{A}}$ is the process from \textbf{Step 3}. Since
$\overline{\overline{A}}_{T}\in L^{2}$, $\overline{Y}_{T}\in L^{2}$,
 $\overline{Y}_{0}\in L^{2}$ and $E\Bigl[\int_{0}^{T}f^{2}(t)dt\Bigr]<\infty$. Hence,  $\int_{0}^{T}\int_{\mathcal{U}}Z_{s}(x)(\mu-\nu)(ds,dx)\in L^{2}$ and consequently
$Z\in\mathds{H}^{2,0}$.\\

  For the uniqueness of the solution, suppose that $(Y,Z,A,C)$
is a solution of the reflected BSDE with driver $f$ and obstacle $\xi$. Then,
by Lemma$~\ref{lemme2}$ (applied with $f_{1}=f_{2}=f$) we obtain
$Y=\overline{Y}$ in $\mathds{S}^{2,0}$, where $\overline{Y}$ is given
by $~\eqref{mono21}$. The uniqueness of $A$, $C$ and $Z$ follows from the uniqueness of Mertens decomposition of strong optional supermartingales and from the uniqueness of the martingale
representation (Lemma$~\ref{lemme1}$).

\end{proof}


\bibliographystyle{alea3}
\bibliography{references}

\end{document}